
\documentclass[preprint,5p,times,twocolumn]{elsarticle}
\bibliographystyle{elsarticle-harv}



\usepackage{amssymb}

\usepackage[usenames,dvipsnames]{xcolor}
\usepackage[utf8]{inputenc}
\usepackage[english]{babel}
\usepackage{verbatim}
\usepackage{amsmath,amssymb,amsthm,mathrsfs, mathtools}
\usepackage{stmaryrd}
\usepackage{dsfont}
\usepackage{textcomp}
\usepackage[mathcal]{euscript}
\usepackage{graphicx}
\usepackage{epsf}
\usepackage[colorlinks=true,urlcolor=blue,linkcolor=blue]{hyperref}
\usepackage{natbib}
\usepackage{geometry}
\usepackage{todonotes}
\usepackage{pbox}
\usepackage{pdflscape} 
\usepackage{lscape} 
\usepackage[linesnumbered, lined, boxed, commentsnumbered]{algorithm2e}
\usepackage{amsfonts}

\usepackage{subfig}
\usepackage{caption}

\setcounter{MaxMatrixCols}{10}

\newtheorem{theorem}{Theorem}

\newtheorem{remark}[theorem]{Remark}

\theoremstyle{definition}
\newtheorem{mydef}{Definition}

\theoremstyle{plain}
\newtheorem{theo}{Theorem}

\newtheorem{cor}[theo]{Corollary}
\newtheorem{prop}[theo]{Proposition}

\theoremstyle{remark}

\newcommand{\1}{\mathds{1}}


\def\mathscr{\EuScript}

\newcommand{\probelement}{\QQ}
\newcommand{\probset}{\cQ}



\newcommand{\price}{\pi}



\newcommand{\cL}{\mathcal{L}}

\newcommand{\cP}{\mathcal{P}}
\newcommand{\cQ}{\mathcal{Q}}

\newcommand{\EE}{\mathbb{E}}
\newcommand{\FF}{\mathbb{F}}

\newcommand{\PP}{\mathbb{P}}
\newcommand{\QQ}{\mathbb{Q}}
\newcommand{\RR}{\mathbb{R}}





















\newcommand{\opt}{^{\sharp}}                                


\newcommand{\np}[1]{(#1)}                                   
\newcommand{\bp}[1]{\big(#1\big)}                           
\newcommand{\Bp}[1]{\Big(#1\Big)}                           

\newcommand{\nc}[1]{[#1]}                                   
\newcommand{\bc}[1]{\big[#1\big]}                           
\newcommand{\Bc}[1]{\Big[#1\Big]}                           

\newcommand{\nce}[1]{[\![#1]\!]}                                       

\newcommand{\na}[1]{\{#1\}}                                 
\newcommand{\ba}[1]{\big\{#1\big\}}                         
\newcommand{\Ba}[1]{\Big\{#1\Big\}}                         
\newcommand{\Bga}[1]{\Bigg\{#1\Bigg\}}                      







\newcommand{\argmax}{\mathop{\arg\max}}                     


\newcommand{\omeg}{\Omega}                                  
\newcommand{\prbt}{\mathbb{P}}                              
\newcommand{\espe}{\mathbb{E}}                              
\newcommand{\mesr}{\mathbb{F}}                              



\makeatletter
\def\va@a{\boldsymbol{\va@arg^{\textstyle\text{\unboldmath$\scriptstyle\va@expo$}}_{\textstyle\text{\unboldmath$\scriptstyle\va@index$}}}}
\def\va#1{\def\va@expo{}\def\va@index{}\def\va@arg{\uppercase{#1}}%
  \@ifnextchar^{\va@h}{\@ifnextchar_\va@u\va@a}}
\def\va@h^#1{\def\va@expo{#1}\@ifnextchar_\va@hu\va@a}
\def\va@u_#1{\def\va@index{#1}\@ifnextchar^\va@uh\va@a}
\def\va@hu_#1{\def\va@index{#1}\va@a}
\def\va@uh^#1{\def\va@expo{#1}\va@a}
\makeatother



\newcommand{\normdelim}[1]{\nc{#1}}                         
\newcommand{\bigdelim}[1]{\bc{#1}}                          
\newcommand{\Bigdelim}[1]{\Bc{#1}}                          
\newcommand{\vardelim}[1]{\left(#1\right)}                  

        %
        %
     %
     %
             %


\newcommand{\nesp}[2]{\espe_{#1}\normdelim{#2}}           
\newcommand{\besp}[2]{\espe_{#1}\bigdelim{#2}}            
\newcommand{\Besp}[2]{\espe_{#1}\Bigdelim{#2}}            


\newcommand{\nmes}[2]{\mesr_{#1} \normdelim{#2}}           
\newcommand{\bmes}[2]{\mesr_{#1} \bigdelim{#2}}            
\newcommand{\Bmes}[2]{\mesr_{#1} \Bigdelim{#2}}            




















\newcommand{\nnorm}[1]{\|#1\|}                              








\def\eqsepv{\; , \enspace}                                  
\def\eqfinv{\; ,}                                           
\def\eqfinp{\; .}                                           





\newcommand{\finpreuvesymb}{$\Box$}
\newcommand{\finremarksymb}{$\Diamond$}
\newcommand{\finexemplesymb}{$\triangle$}
\newcommand{\finpreuve}{\ \hspace*{\fill}\finpreuvesymb}
\newcommand{\finremark}{\ \hspace*{\fill}\finremarksymb}
\newcommand{\finexemple}{\ \hspace*{\fill}\finexemplesymb}




\makeatletter
\def\endproof{\finpreuve\@endtheorem}
\def\endremark{\finremark\@endtheorem}
\def\endexample{\finexemple\@endtheorem}
\makeatother

\newcommand{\RNSP}[1]{\textrm{RnSp}\np{#1}}
\newcommand{\RASP}[1]{\textrm{RaSp}\np{#1}}
\newcommand{\RNEQ}[1]{\textrm{RnEq}\np{#1}}
\newcommand{\RAEQ}[1]{\textrm{RaEq}\np{#1}}
\newcommand{\RAAD}[1]{\textrm{RaEq-AD}\np{#1}}


\makeatletter
\def\amsbb{\use@mathgroup \M@U \symAMSb}
\makeatother

\usepackage{bbold}

\usepackage{tikz}

\usepackage{placeins}

\usetikzlibrary{arrows,automata}
\usetikzlibrary{decorations.text}





\journal{Operations Research Letters}

\begin{document}

\begin{frontmatter}

\title{On risk averse competitive equilibrium}

\author[label1,label3]{Henri Gerard}

\address[label1]{Université Paris-Est, CERMICS (ENPC), F-77455 Marne-la-Vallée, France}
\address[label2]{Electric Power Optimization Centre, The University of Auckland, New Zealand}
\address[label3]{Université Paris-Est, Labex Bézout, F-77455 Marne-la-Vallée, France}

\author[label1]{Vincent Leclere}

\author[label2]{Andy Philpott}

\begin{abstract}

We discuss risked competitive partial equilibrium in a setting in which
agents are endowed with coherent risk measures. In contrast to social
planning models, we show by example that risked equilibria are not unique,
even when agents' objective functions are strictly concave. We also show that standard computational methods find only a subset of the equilibria, even with multiple starting points.
\end{abstract}

\begin{keyword}
Stochastic Equilibrium \sep Stochastic Programming \sep  Risk averse equilibrium \sep Electricity Markets
\end{keyword}

\end{frontmatter}

\section{Introduction}

Most industrialised regions of the world have over the last thirty years established wholesale electricity markets that take the form of an auction that matches supply and demand. The exact form of these auction mechanisms vary by jurisdiction, but they typically require offers of energy from suppliers at costs they are willing to supply, and clear a market by dispatching these offers in order of increasing cost. Day-ahead markets such as those implemented in many North American electricity systems, seek to arrange supply well in advance of its demand, so that thermal units can be prepared in time. Since the demand cannot be predicted with absolute certainty, day-ahead markets must be accompanied by a separate balancing market to deal with the variation in load and generator availability in real time. These are often called {\em two-settlement} markets.
The market mechanisms are designed to be as efficient as possible in the sense that they should aim to maximize the total welfare of producers and consumers.

In response to pressure to reduce $CO_{2}$ emissions and increase the penetration of renewables, electricity pool markets are procuring increasing amounts of electricity from 
 intermittent sources such as wind and solar. If probability distributions for intermittent supply are known for these systems then it makes sense to maximize the expected total welfare of producers and consumers in each dispatch. Then many repetitions of this will yield a long run total benefit that is maximized. Maximizing expected welfare can be modeled as a two-stage stochastic program. Methods for computing prices and single-settlement payment mechanisms for such a {\em stochastic market clearing} mechanism are described in a number of papers (see \citet{PZP}, \citet{WongEtAl} and \citet{ZPBB}). When evaluated using the assumed probability distribution on supply, stochastic market clearing can be shown to be more efficient than two-settlement systems.
 
 If agents in these systems are risk averse then one might also seek to maximize some risk-adjusted social welfare. In this setting the computation of prices and payments to the agents becomes more complicated. If agents use coherent risk measures then it is possible to define a complete market for risk in a precise sense. If the market is complete then a perfectly competitive partial equilibrium will also maximize risk-adjusted social welfare, i.e. it is efficient. On the other hand if the market for risk is not complete, then perfectly competitive partial equilibrium can be inefficient. This has been explored in a number of papers (see e.g. \citet{de2017investment}, \citet{ehrenmann2011generation} and \citet{ralph2015risk}).
 
 In this paper we study a class of stochastic dispatch and pricing mechanisms under the assumption that agents will attempt to maximize their risk-adjusted welfare at these prices. Agents have coherent risk measures and are assumed to behave as price takers in the energy and risk markets. We aim at enlightening some difficulties that arise when risk markets are not complete.  We describe a simple instance of a stochastic market that has three different equilibria. Two of these points are stable in the sense of \citet{samuelson1941stability} and are attractors of tatônnement algorithms. The third equilibrium is unstable, yet is the solution yielded by the well-known PATH solver in GAMS (See \citet{ferris2000complementarity}). Our example  illustrates the delicacy of seeking numerical solutions for equilibria in incomplete markets. Since these are used for justifying decisions, the nonuniqueness of solutions in this setting is undesirable.

The paper is laid out as follow. In Section~\ref{sec:statement} we present the equilibrium and optimization models we are going to study. In Section~\ref{sec:equivalences} we give links between equilibrium and optimization problems in the risk neutral and complete risk-averse cases. Finally, in Section~\ref{sec:multiple_eq} we showcase a simple example with multiple equilibria in the incomplete risk-averse case.

\subsection{Notation}
We use the following notation throughout the paper: $\nce{a;b}$ is the set of integers between $a$ and $b$ (included), random variables are denoted in bold,  $\omeg$ is a finite sample space, $\prbt$ is a probability distribution over $\omeg$, $\espe_{\prbt}$ is used to refer to expectation with respect to $\prbt$, $\mesr$ is used to refer to a risk measure. We denote by $x \perp y$ the complementarity condition $x^{T}y=0$.

\section{Statement of problem}
\label{sec:statement}
Consider a two time-step single-settlement market for one good. 
In a single-settlement market, the producer can arrange in advance for a production of $x$ at 
a marginal cost $cx$ as a first-step decision, and choose the value of a recourse variable $\mathbf{x}_{r}$ incurring an uncertain marginal cost $\mathbf{c}_{r}\mathbf{x}_{r}$. We assume that there are a finite number of scenarios $\omega \in \omeg$ determining the coefficient $\mathbf{c}_r\np{\omega}$.

The product is purchased in the second step by a consumer with a utility function 
$\mathbf{V}\np{\omega}\mathbf{y}\np{\omega}-\frac{1}{2}\mathbf{r}\np{\omega}\mathbf{y}^2\np{\omega}$. The consumer has no first-stage decision, and the amount purchased $\mathbf{y}\np{\omega}$ depends on the scenario.

\subsection{Social planner problem}
Decisions $x$, $\mathbf{x}_{r}\np{\omega}$ and $\mathbf{y}\np{\omega}$ can be made to maximize a
social objective.
We denote by
\begin{subequations}\label{eq_def_welfares}
    \begin{align}
        \va{W}_{p}\np{\omega} 
        &= 
        -\frac{1}{2}cx^{2}-\frac{1}{2}\mathbf{c}_{r}\np{\omega}\mathbf{x}_{r}\np{\omega}^{2}
        \eqfinv
        \intertext{the welfare of the producer, and by}
        \va{W}_c\np{\omega}
        &= 
        \mathbf{V}\np{\omega}\mathbf{y}\np{\omega}-\frac{1}{2}\mathbf{r}\np{\omega }\mathbf{y}\np{\omega}^{2}\eqfinv
    \end{align}
\end{subequations}    
the welfare of the consumer where both these expressions ignore the price paid for the good in scenario $\omega$. 
Then the welfare of the social planner can be defined by 
$\va{W}_{sp} = \va{W}_{p} + \va{W}_{c}$.

Optimization of the social objective requires us to aggregate the uncertain outcomes from the scenarios. This can be done by taking expectations with respect to an underlying probability measure $\prbt$ or using a more general risk measure.

\subsubsection{Risk neutral social planner problem}
Endow the set of scenario $\Omega$ with a probability $\prbt$, 
then a risk-neutral social planner might seek to maximize
the expected total social welfare under the constraint that supply equals demand. This problem is denoted by $\RNSP{\prbt}$ and reads
\begin{subequations}
    \begin{align}\label{eq_problem_RNSP}
        &\RNSP{\prbt}:
        \nonumber\\
        &\max_{x, \mathbf{x}_{r}, \mathbf{y}}
        \quad
        \nesp{\prbt}{\va{W}_{sp}}
        \eqfinv\\
        &\textrm{s.t. }
        \quad
        x + \mathbf{x}_{r}(\omega) \geq \mathbf{y}(\omega)
        \eqsepv
        \forall \omega \in \omeg
        \eqfinp
    \end{align}
\end{subequations}



\subsubsection{Risk averse social planner problem}
Choosing expectation $\espe_\PP$, assumes a risk-neutral point of view, where two random losses with same expectation but different variances are deemed equivalent. In practice a number of agents are risk averse. To model risk aversion we generally use a \emph{risk measure} $\FF$, that is a functional that associates to a random welfare its deterministic equivalent, i.e. the deterministic welfare deemed as equivalent to the random loss. 

A risk-averse planner solves a maximization problem $\RASP{\mesr}$ defined by
\begin{subequations}\label{eq_problem_RASP}
    \begin{align}
        &\RASP{\mesr}:&
        \nonumber\\
        &\max_{x, \mathbf{x}_{r}, \mathbf{y}} 
        \quad
        \nmes{}{\va{W}_{sp}}
        \eqfinv
        \\
        &\textrm{s.t. }
        \quad
        x + \mathbf{x}_{r}\np{\omega} \geq \mathbf{y}\np{\omega}
        \eqsepv
        \forall \omega \in \omeg
        \eqfinp
    \end{align}
\end{subequations}

A risk measure $\FF$ is said to be coherent (see \citet{artzner1999coherent}) if it satisfies four natural properties: monotonicity ( if $\va X \geq \va Y$ then $\FF[\va X] \geq \FF[\va y]$), concavity ($\FF$ is concave), translation-equivariance ($\FF[\va X+c]=\FF[\va X]+c$ with $c\in\RR$) and positive homogeneity ($\FF[\lambda \va X] = \lambda \FF[\va X]$, with $\lambda \geq 0$).
 By convex duality theory (see \citet{shapiroetal}), a lower-semicontinuous coherent risk measure can be written
    $\bmes{}{\va{Z}}   =
    \min_{\probelement \in \probset}
    \besp{\probelement}{\va{Z}}$,
where $\probset$ is a closed, convex, non-empty set of probability distributions over $\omeg$.
If $\probset$ is a polyhedron defined by $K$ extreme points $\np{\probelement_{k}}_{k \in \nce{1;K}}$, then the risk measure is denoted $\check{\FF}$ and said to be \emph{polyhedral}, with
 $   \check{\FF}[\va Z]
    =
    \min_{\probelement_{1},\dots ,\probelement_{K}}
    \besp{\probelement_{k}}{\va{Z}}$.

Problem $\RASP{\check{\FF}}$ can be written as follows
\begin{subequations}\label{eq_problem_risk_optimization}
    \begin{align}
        &\RASP{\check{\FF}}:
        \nonumber\\
        &\max_{\theta,x,\mathbf{x}_{r},\mathbf{y}} 
        \quad 
        \theta
        \\
        &\textrm{s.t.} 
        \qquad 
        \theta \leq
        \besp{\probelement_{k}}{\va{W}_{sp}}\eqsepv
        \forall k \in \nce{1;K}
        \eqfinv
        \label{eq_constraint_RASP}
        \\
        &\phantom{s.t.}
        \qquad
        x + \mathbf{x}_{r}\np{\omega} \geq \mathbf{y}\np{\omega}
        \eqsepv
        \forall \omega \in \omeg
        \eqfinp
    \end{align}
\end{subequations}
In what follows we assume that all risk measures are coherent.
\subsubsection{Remark on non linearity of risk averse objective function}
By linearity of expectation we have
$    \nesp{\prbt}{\va{W}_{sp}}
    =
    \nesp{\prbt}{\va{W}_{p}}
    +
    \nesp{\prbt}{\va{W}_{c}}$
hence the criterion of the social planner is natural, 
which is not the case anymore with risk-aversion. The social planner criterion could be either $\nmes{}{\va{W}_{sp}}$ or $\nmes{}{\va{W}_{p}} + \nmes{}{\va{W}_{c}}$.
Furthermore, by concavity and positive homogeneity, we have $\nmes{}{\va{W}_{p} + \va{W}_{c}} \geq \nmes{}{\va{W}_{p}}+\nmes{}{\va{W}_{c}}$.

\subsection{Equilibrium problem}
We now define a competitive partial equilibrium for our model. This competitive equilibrium can be risk neutral or risk averse. Definitions come from general equilibrium theory (See \citet{arrow1954existence} or \citet{uzawa1960walras}).

\subsubsection{Risk neutral equilibrium}
Given a probability $\prbt$ on $\omeg$, a \emph{risk-neutral equilibrium} $\RNEQ{\prbt}$ is a set of prices $\ba{\va{\pi}\np{\omega}\eqsepv \omega \in \omeg}$ such that there exists 
a solution to the system 
\begin{subequations}\label{eq_problem_RNEQ}
    \begin{align}
        &\RNEQ{\prbt}:
        \nonumber\\
            &\begin{aligned}\label{eq_problem_producer_RNEQ}
                \max_{x,\mathbf{x}_{r}}
                \quad
                \Besp{\prbt}{\va{W}_{p} + \va{\pi}\bp{x+\mathbf{x}_{r}}}
                \eqfinv
            \end{aligned}
            \\&
            \phantom{i}\begin{aligned}\label{eq_problem_consumer_RNEQ}
                \max_{\mathbf{y}}
                \quad
                \besp{\prbt}{\va{W}_{c} - \va{\pi}\mathbf{y}}
                \eqfinv
            \end{aligned}
            \\&
            \begin{aligned}\label{eq_problem_constraint_RNEQ}
                0 \leq
                x+\mathbf{x}_{r}\np{\omega} - \mathbf{y}\np{\omega}
                \perp
                \va{\price}\np{\omega}
                \geq 0
                \eqsepv
                \forall \omega \in \omeg
                \eqfinp
            \end{aligned}
    \end{align}
\end{subequations}

Here, the producer maximizes its expected profit~\eqref{eq_problem_producer_RNEQ}, the consumer maximizes its expected utility~\eqref{eq_problem_consumer_RNEQ} and the market clears with~\eqref{eq_problem_constraint_RNEQ} (which means that either prices are null or supply equals demand). As the consumer has no first stage decision, she can optimize each scenario independently and so problem~\eqref{eq_problem_consumer_RNEQ} can be replaced by
\begin{equation*}
    \max_{\mathbf{y}\np{\omega}}
    \quad
    \va{W}_{c}(\omega) - \va{\pi}\np{\omega}\mathbf{y}(\omega)
    \eqsepv
    \forall \omega \in \omeg
    \eqfinp
\end{equation*}

\subsubsection{Risk averse equilibrium}

Given two risk measures $\mesr_{p}$ and $\mesr_{c}$ over $\omeg$, \emph{a risk-averse equilibrium} $\RAEQ{\mesr_{p}, \mesr_{c}}$ is a set of prices $\ba{\va{\pi}\np{\omega}:\omega \in 
\omeg}$ such that there exists a solution to the following system
\begin{subequations}\label{eq_problem_RAEQ_general}
    \begin{align}
        &\RAEQ{\mesr_{p}, \mesr_{c}}:
        \nonumber \\
        &\begin{aligned}\label{eq_RAEQ_general_producer}
            \max_{x,\mathbf{x}_{r}}
            \quad
            \Bmes{p}{\va{W}_{p}+\va{\pi}\bp{x+\mathbf{x}_{r}}}
            \eqfinv
        \end{aligned}
        \\&\begin{aligned}\label{eq_RAEQ_general_consumer}
            \max_{\mathbf{y}}
            \quad
            \bmes{c}{\va{W}_{c}-\va{\pi}\mathbf{y}}
            \eqfinv
        \end{aligned}
        \\&\begin{aligned}\label{eq_RAEQ_general_complementarity}
            0 \leq
            x+\mathbf{x}_{r}\np{\omega} - \mathbf{y}\np{\omega}
            \perp
            \va{\price}\np{\omega}
            \geq 0
            \eqsepv
            \forall \omega \in \omeg
            \eqfinp
        \end{aligned}
    \end{align}
\end{subequations}

Since the coherent risk measure $\mesr_{c}$ of the consumer is monotonic, and noting that she has no first-stage decision, she can optimize scenario per scenario. Thus, she is 
\emph{insensitive to  risk} as any monotonic risk measure will lead to the same action (although not the same welfare). Since $\mesr_{p}$ is also monotonic, we can endow both 
agents with the same risk measure.
In that case, we denote problem~\eqref{eq_problem_RAEQ_general} by $\RAEQ{\mesr}$.

We now consider polyhedral risk measure $\check{\FF}$, using formulation~\eqref{eq_problem_risk_optimization}, the equilibrium problem~\eqref{eq_problem_RAEQ_general} reads  
\begin{subequations}\label{eq_problem_RAEQ_polyhedral}
    \begin{align}
        &\RAEQ{\check{\FF}}:
        \nonumber\\
        &\max_{\theta,x,\mathbf{x}_{r}}
        \quad
        \theta\\
        &\;\;\; \textrm{s.t.}
        \quad
        \theta \leq \besp{\probelement_{k}}{\va{W}_{p}+\va{\pi}\np{x+\mathbf{x}_{r}}}
        \eqsepv \forall k \in \nce{1;K}
        \eqfinv
        \nonumber\\
        &\max_{\mathbf{y}(\omega)}
        \quad
        \va{W}_{c}(\omega) - \va{\pi}\mathbf{y}(\omega)
        \eqsepv
        \forall \omega \in \omeg
        \eqfinv
        \\
        &0 \leq
        x+\mathbf{x}_{r}\np{\omega} - \mathbf{y}\np{\omega}
        \perp
        \va{\price}\np{\omega}
        \geq 0
        \eqsepv
        \forall \omega \in \omeg
        \eqfinp
    \end{align}
\end{subequations}

\subsection{Trading risk with Arrow-Debreu securities}
Until now, we have considered equilibrium problems in an incomplete market. Following the path of~\citet{PhilpottFerrisWets16}, we complete the market using Arrow-Debreu securities. 

\begin{mydef}
An \emph{Arrow-Debreu security} for node $\omega \in \omeg$ is a contract that charges a price $\mu\np{\omega}$ in the first stage, to receive a payment of $1$ in scenario $\omega$.
\end{mydef}

The consumer  now has a first-stage decision which is the number of contracts she buys, so the choice of the consumer risk measure $\mesr_{c}$ has now consequences. For convenience, 
this risk measure $\mesr_{c}$ is chosen to be the same as that of the producer $\mesr_{p}$ and will be denoted by $\mesr$. Unless stated otherwise, from now on we use polyhedral risk measures.

Denote $\mathbf{a}\np{\omega}$ (resp. $\mathbf{b}\np{\omega}$) the number of Arrow-Debreu securities bought by the producer (resp. the consumer). We denote by $\va{\mu}(\omega)$ the price of the Arrow-Debreu securities associated with scenario $\omega$. In this case the producer pays $\sum_{\omega \in \omeg} \va{\mu}\np{\omega}\mathbf{a}\np{\omega}$ in the first stage, in order to receive $\mathbf{a}\np{\omega}$ in scenario $\omega$. As $\mathbf{a}\np{\omega}+\mathbf{b}\np{\omega}$ represents excess demand, requiring that supply is greater than demand consists in requiring  $\mathbf{a}\np{\omega}+\mathbf{b}\np{\omega} \leq 0$.
Prices $\na{\va{\pi}\np{\omega}, \va \mu(\omega)}_{\omega\in\Omega}$ form a \emph{risk-trading
equilibrium} if there exists a solution to:

\begin{subequations}\label{eq_problem_RAEQ_AD}
    \begin{align}
            &\RAAD{\check{\FF}}:
            \nonumber\\
            &\max_{\theta,x,\mathbf{x}_{r},\mathbf{a}}
            \quad
            \theta - \sum_{\omega \in \omeg} \va{\mu}\np{\omega}\mathbf{a}\np{\omega}
            \\
            &\;\;\;\textrm{ s.t.}
            \quad
            \theta \leq \Besp{\probelement_{k}}{\va{W}_{p} + \va{\pi}\np{x+\mathbf{x}_{r}} + \mathbf{a}}
            \eqsepv \forall k \in \nce{1;K}
            \eqfinv
            \label{cst:lambda_k}
            \\
            &\max_{\phi,\mathbf{y},\mathbf{b}}
            \quad
            \phi - \sum_{\omega \in \omeg} \va{\mu}\np{\omega}\mathbf{b}\np{\omega}\\
            &\;\;\;\textrm{ s.t.}
            \quad
            \phi \leq \besp{\probelement_{k}}{\va{W}_{c} - \va{\pi}\mathbf{y} + \mathbf{b}}
            \eqsepv \forall k \in \nce{1;K}
            \eqfinv
            \label{cst:sigma_k}
            \\
            &0 \leq
            x+\mathbf{x}_{r}\np{\omega} - \mathbf{y}\np{\omega}
            \perp
            \va{\price}\np{\omega}
            \geq 0
            \eqsepv
            \forall \omega \in \omeg
            \eqfinv
            \label{cst:market_clear}
            \\
            &0 \leq
            -\mathbf{a}\np{\omega}-\mathbf{b}\np{\omega}
            \perp
            \va{\mu}\np{\omega}
            \geq 0
            \eqsepv
            \forall \omega \in \omeg
            \eqfinp  \label{cst:mu_clear}
    \end{align}
\end{subequations}

\section{Some equivalences between social planner problems and equilibrium problems}
\label{sec:equivalences}
We recall a trivial equivalence between problem $\RNSP{\prbt}$ and problem $\RNEQ{\prbt}$ before showing an equivalence between problem $\RASP{\check{\FF}}$ and problem $\RAAD{\check{\FF}}$.

\subsection{Equivalence in the risk neutral case}
\begin{prop}\label{prop_equivalence_RNSP_RNEQ}
    Let $\prbt$ be a probability measure over $\omeg$.
    The elements $x^{\opt}$, $\mathbf{x}_{r}^{\opt}$ and $\mathbf{y}^{\opt}$ are optimal 
    solutions to $\RNSP{\prbt}$ if and only if there exist equilibrium prices $\va{\price}^{\opt}$ for $\RNEQ{\prbt}$ with 
    associated optimal decisions $x^{\opt}$, $\mathbf{x}_{r}^{\opt}$ and $\mathbf{y}^{\opt}$.
\end{prop}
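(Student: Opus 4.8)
The plan is to read Proposition~\ref{prop_equivalence_RNSP_RNEQ} as the first and second welfare theorems for this particular convex program, and to prove it by identifying the Karush--Kuhn--Tucker (KKT) system of $\RNSP{\prbt}$ with the union of the agents' optimality conditions and the market-clearing complementarity. Since $\omeg$ is finite, $\RNSP{\prbt}$ is the maximisation of a concave quadratic objective over a polyhedron; because the constraints $x+\mathbf{x}_{r}\np{\omega}\ge\mathbf{y}\np{\omega}$ are affine, no constraint qualification is required, so a feasible point is optimal if and only if it satisfies the KKT conditions, and a multiplier always exists. The same concavity remark applies to the producer problem~\eqref{eq_problem_producer_RNEQ} and to the scenariowise consumer problems $\max_{\mathbf{y}\np{\omega}}\va{W}_{c}\np{\omega}-\va{\price}\np{\omega}\mathbf{y}\np{\omega}$: each is a concave program, so a decision is optimal for it if and only if it is a stationary point of its objective.

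First I would form the Lagrangian of $\RNSP{\prbt}$ by attaching to the constraint of scenario $\omega$ the multiplier $\prbt\np{\omega}\va{\price}\np{\omega}$ with $\va{\price}\np{\omega}\ge 0$; the scaling by $\prbt\np{\omega}$ is what makes the multiplier emerge as a price. Stationarity in $x$ then reads $\nesp{\prbt}{-cx+\va{\price}}=0$, stationarity in $\mathbf{x}_{r}\np{\omega}$ reads $-\mathbf{c}_{r}\np{\omega}\mathbf{x}_{r}\np{\omega}+\va{\price}\np{\omega}=0$, and stationarity in $\mathbf{y}\np{\omega}$ reads $\mathbf{V}\np{\omega}-\mathbf{r}\np{\omega}\mathbf{y}\np{\omega}-\va{\price}\np{\omega}=0$. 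The first two are exactly the first-order conditions of the producer's problem~\eqref{eq_problem_producer_RNEQ} evaluated at the prices $\va{\price}$, and the third is the first-order condition of the decoupled consumer problem. The remaining KKT conditions --- primal feasibility, dual feasibility $\va{\price}\np{\omega}\ge0$, and complementary slackness --- are precisely the complementarity relation~\eqref{eq_problem_constraint_RNEQ}.

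With this dictionary the two implications are immediate. If $x^{\opt},\mathbf{x}_{r}^{\opt},\mathbf{y}^{\opt}$ solve $\RNSP{\prbt}$, choose KKT multipliers and set $\va{\price}^{\opt}\np{\omega}$ accordingly; by the correspondence above $\np{x^{\opt},\mathbf{x}_{r}^{\opt}}$ is stationary, hence optimal, for the producer, $\mathbf{y}^{\opt}$ is optimal for the consumer, and the market clears, so $\va{\price}^{\opt}$ are $\RNEQ{\prbt}$ prices with the stated optimal decisions. Conversely, equilibrium prices and decisions satisfy the agents' first-order conditions (necessary for their optimality by concavity) together with~\eqref{eq_problem_constraint_RNEQ}, and these reassemble verbatim into the KKT system of $\RNSP{\prbt}$; since those conditions are also sufficient, $x^{\opt},\mathbf{x}_{r}^{\opt},\mathbf{y}^{\opt}$ solve the social planner problem.

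The argument is essentially bookkeeping, and I do not expect a genuine obstacle. The only point that deserves care is the existence of a finite optimum and of the associated multiplier: this is where I would explicitly invoke finiteness of $\omeg$, polyhedrality of the feasible sets (giving strong duality without Slater), and the standing positivity assumptions on the curvature coefficients $c$, $\mathbf{c}_{r}\np{\omega}$, $\mathbf{r}\np{\omega}$, which make the quadratic objectives concave and keep the producer's and consumer's sign-unconstrained problems bounded above at nonnegative prices.
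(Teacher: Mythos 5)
Your proposal is correct and follows essentially the same route as the paper: the paper's proof merges the two agents' problems into a single joint maximisation whose Lagrangian, with multiplier $\prbt\np{\omega}\va{\price}\np{\omega}$ on the scenario-$\omega$ constraint, is exactly the KKT system of $\RNSP{\prbt}$, and then invokes concavity and linearity of the constraints for necessity and sufficiency. You simply make the KKT bookkeeping explicit where the paper leaves it implicit, so no substantive difference.
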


\begin{proof}
As the producer and the consumer optimize over different uncoupled variables, it is equivalent to optimize their objectives separately or jointly. Problem~\eqref{eq_problem_RNEQ} is thus equivalent to
\begin{subequations}
    \begin{align*}
            &\begin{aligned}
                \max_{x,\mathbf{x}_{r},\mathbf{y}} \qquad 
                \besp{\prbt} {\va{W}_{p} + \va{\pi}
                \np{x+\mathbf{x}_{r}}}
                +
                \besp{\prbt}{\va{W}_{c} - \va{\pi}\mathbf{y}} 
                \eqfinv
            \end{aligned}
            \\&\begin{aligned}
                0 \leq
                x+\mathbf{x}_{r}\np{\omega} - \mathbf{y}\np{\omega}
                \perp
                \va{\price}\np{\omega}
                \geq 0
                \eqsepv
                \forall \omega \in \omeg
                \eqfinv
            \end{aligned}
    \end{align*}
\end{subequations}
which by linearity of the expectation is equivalent to
\begin{subequations}
    \begin{align*}
            &\begin{aligned}
                \max_{x,\mathbf{x}_{r},\mathbf{y}} \qquad 
                \besp{\prbt}{
                    \va{W}_{sp} + \va{\pi}\np{x+\mathbf{x}_{r}-\mathbf{y}}
                }
                \eqfinv
            \end{aligned}
            \\&\begin{aligned}\label{eq_constraint_complementarity}
                0 \leq
                x+\mathbf{x}_{r}\np{\omega} - \mathbf{y}\np{\omega}
                \perp
                \price\np{\omega}
                \geq 0
                \eqsepv
                \forall \omega \in \omeg
                \eqfinp
            \end{aligned}
    \end{align*}
\end{subequations}
This is equivalent to the optimality conditions for problem~\eqref{eq_problem_RNSP}.
Convexity and linearity of constraints ends the proof.
\end{proof} 
\begin{cor}\label{cor_uniqueness_risk_neutral}
    If both the producer's and the consumer's criterion are strictly concave and if $\prbt$ charges all $\omega$, then $\RNSP{\prbt}$ admits a unique solution and $\RNEQ{\prbt}$ admits a unique equilibrium.
\end{cor}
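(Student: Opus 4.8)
The plan is to derive uniqueness for the equilibrium from uniqueness for the social planner's problem via Proposition~\ref{prop_equivalence_RNSP_RNEQ}, and then to pin down the equilibrium prices separately from the consumer's first-order condition.

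First I would show that $\RNSP{\prbt}$ has a unique optimal solution. Since $\prbt$ charges every $\omega \in \omeg$, the function $\nesp{\prbt}{\va{W}_{p}}$ is strictly concave in $(x,\mathbf{x}_{r})$ and $\nesp{\prbt}{\va{W}_{c}}$ is strictly concave in $\mathbf{y}$ (each is a $\prbt$-weighted sum, with all weights positive, of the respective strictly concave per-scenario criterion). As these two blocks of variables are disjoint, their sum $\nesp{\prbt}{\va{W}_{sp}}$ is jointly strictly concave in $(x,\mathbf{x}_{r},\mathbf{y})$. The feasible set is the nonempty closed convex polyhedron cut out by $x + \mathbf{x}_{r}\np{\omega} \geq \mathbf{y}\np{\omega}$, $\omega \in \omeg$; and after expanding $\nesp{\prbt}{\va{W}_{sp}}$ and using $\prbt\np{\omega}>0$, every quadratic coefficient is strictly negative, so the objective tends to $-\infty$ as $\norm{(x,\mathbf{x}_{r},\mathbf{y})}\to\infty$. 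Hence the supremum is attained, and by strict concavity at a unique point $(x^{\opt},\mathbf{x}_{r}^{\opt},\mathbf{y}^{\opt})$.

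Next, Proposition~\ref{prop_equivalence_RNSP_RNEQ} provides both the existence of an $\RNEQ{\prbt}$ equilibrium and the fact that every equilibrium of $\RNEQ{\prbt}$ has decision variables equal to $(x^{\opt},\mathbf{x}_{r}^{\opt},\mathbf{y}^{\opt})$. It remains to show that the equilibrium prices are unique. At an equilibrium, the consumer's problem decomposes scenario by scenario into $\max_{\mathbf{y}\np{\omega}} \mathbf{V}\np{\omega}\mathbf{y}\np{\omega} - \frac{1}{2}\mathbf{r}\np{\omega}\mathbf{y}\np{\omega}^{2} - \va{\pi}\np{\omega}\mathbf{y}\np{\omega}$, which is strictly concave in $\mathbf{y}\np{\omega}$ (strict concavity of the consumer's criterion forces $\mathbf{r}\np{\omega}>0$); its optimizer $\mathbf{y}^{\opt}\np{\omega}$ therefore satisfies the stationarity equation $\va{\pi}\np{\omega} = \mathbf{V}\np{\omega} - \mathbf{r}\np{\omega}\mathbf{y}^{\opt}\np{\omega}$. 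Since $\mathbf{y}^{\opt}$ is already determined, so is $\va{\pi}$, and thus $\RNEQ{\prbt}$ admits a unique equilibrium.

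The argument is short, and the step that needs a little care is the existence half of the claim that $\RNSP{\prbt}$ has a solution: one must note that the objective is coercive over the (generally unbounded) feasible polyhedron so that the maximum is attained, after which strict concavity yields uniqueness. Everything else reduces to Proposition~\ref{prop_equivalence_RNSP_RNEQ} and the elementary first-order condition of the consumer's subproblem.
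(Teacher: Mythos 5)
Your proof is correct and follows essentially the same route as the paper: strict concavity (using that $\prbt$ charges every scenario) gives uniqueness for $\RNSP{\prbt}$, Proposition~\ref{prop_equivalence_RNSP_RNEQ} transfers this to the equilibrium decisions, and the consumer's per-scenario first-order condition $\va{\pi}\np{\omega} = \mathbf{V}\np{\omega} - \mathbf{r}\np{\omega}\mathbf{y}^{\opt}\np{\omega}$ pins down the prices. Your added coercivity argument for attainment of the maximum is a detail the paper leaves implicit, and is a welcome clarification rather than a deviation.
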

\begin{proof}
    The probability distribution $\prbt$ charges all $\omega$. Then by strict concavity, $\RNSP{\prbt}$ has a unique solution. If $\RNEQ{\prbt}$ has two different solutions $\np{x^{1},\mathbf{x}_{r}^{1},\mathbf{y}^{1}}$ and $\np{x^{2},\mathbf{x}_{r}^{2},\mathbf{y}^{2}}$ with $\va{\price}^{1}$ and $\va{\price}^{2}$ respectively then, by Proposition~\ref{prop_equivalence_RNSP_RNEQ}, $x^{1} = x^{2}$, $\mathbf{x}_{r}^{1} = \mathbf{x}_{r}^{2}$, and $\mathbf{y}^{1} = \mathbf{y}^{2}$. Since~\eqref{eq_problem_consumer_RNEQ} implies $\va{\price}^{1}\np{\omega} = \mathbf{V}\np{\omega}-\mathbf{r}\np{\omega}\mathbf{y}^{1}\np{\omega}$, we have $\price^{1} = \price^{2}$ which gives the result.
\end{proof}

\subsection{Equivalence in the risk-averse case}
The following proposition is an extension of Theorem 7 of~\citet{ralph2015risk}, to a model with producers and consumers, in the special case of a finite number of scenarios with polyhedral risk measures.
\begin{prop}\label{prop_RASP_RAEQAD}
Let $\va{\price}$ and $\va{\mu}$ be equilibrium prices such that $\bp{x^{\opt},\mathbf{x}_{r}^{\opt},\mathbf{y}^{\opt},\mathbf{a},\mathbf{b},\theta ,\varphi }$ solves
$\RAAD{\check{\FF}}$. Then 

(i) $\va \mu$ is a probability measure, and $x^{\opt},\mathbf{x}_{r}^{\opt},\mathbf{y}^{\opt}$ solves 
the risk-neutral social
planning problem when evaluated using probability $\va{\mu}$, 
$\RNSP{\va \mu}$.

(ii) $x^{\opt},\mathbf{x}_{r}^{\opt},\mathbf{y}^{\opt}$ solves 
the risk-averse social planning problem, 
$\RAAD{\check{\FF}}$
with worst case measure $\va{\mu}$.
\end{prop}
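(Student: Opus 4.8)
The idea is to read off the Karush--Kuhn--Tucker (KKT) systems of the producer's and the consumer's sub-problems in $\RAAD{\check{\FF}}$, to identify the Arrow--Debreu prices $\va{\mu}$ as a convex combination of the extreme points $\probelement_{1},\dots,\probelement_{K}$ of $\probset$, to reassemble the resulting relations into the KKT system of $\RNSP{\va{\mu}}$ (which gives~(i)), and then to obtain~(ii) by a sandwich estimate exploiting $\va{\mu}\in\probset$. Throughout we use that the welfare functions are concave (i.e.\ $c,\mathbf{c}_{r},\mathbf{r}\geq0$), so that each sub-problem of $\RAAD{\check{\FF}}$, as well as $\RNSP{\va{\mu}}$ and $\RASP{\check{\FF}}$, is convex; since taking $\theta$ (resp.\ $\phi$) far below the cut right-hand sides produces a Slater point, the KKT conditions are necessary, and by convexity sufficient.

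First I would attach multipliers $\lambda_{k}\geq0$ to the producer cuts~\eqref{cst:lambda_k} and $\sigma_{k}\geq0$ to the consumer cuts~\eqref{cst:sigma_k}. Stationarity of the producer problem in $\theta$ and in each $\mathbf{a}\np{\omega}$ yields $\sum_{k}\lambda_{k}=1$ and $\va{\mu}\np{\omega}=\sum_{k}\lambda_{k}\probelement_{k}\np{\omega}$; stationarity of the consumer problem in $\phi$ and in each $\mathbf{b}\np{\omega}$ yields $\sum_{k}\sigma_{k}=1$ and $\va{\mu}\np{\omega}=\sum_{k}\sigma_{k}\probelement_{k}\np{\omega}$. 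Hence $\va{\mu}$ is a convex combination of probability measures, so it is a probability measure, and $\va{\mu}\in\probset$ --- the first claim of~(i). Stationarity in $x$, $\mathbf{x}_{r}\np{\omega}$ and $\mathbf{y}\np{\omega}$ reads $cx^{\opt}=\besp{\va{\mu}}{\va{\pi}}$ and, for every $\omega$, $\va{\mu}\np{\omega}\bp{\va{\pi}\np{\omega}-\mathbf{c}_{r}\np{\omega}\mathbf{x}_{r}^{\opt}\np{\omega}}=0=\va{\mu}\np{\omega}\bp{\va{\pi}\np{\omega}-\mathbf{V}\np{\omega}+\mathbf{r}\np{\omega}\mathbf{y}^{\opt}\np{\omega}}$. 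Setting $\va{\nu}\np{\omega}:=\va{\mu}\np{\omega}\va{\pi}\np{\omega}$ (which is $\geq0$), one checks that $\va{\nu}$ is a valid multiplier for the supply constraint of $\RNSP{\va{\mu}}$ at $\np{x^{\opt},\mathbf{x}_{r}^{\opt},\mathbf{y}^{\opt}}$: its stationarity conditions reduce to the identities just displayed (with $\va{\nu}=\va{\mu}\va{\pi}$ substituted), primal feasibility is the left inequality of~\eqref{cst:market_clear}, dual feasibility is $\va{\nu}\geq0$, and complementary slackness $\va{\nu}\np{\omega}\bp{x^{\opt}+\mathbf{x}_{r}^{\opt}\np{\omega}-\mathbf{y}^{\opt}\np{\omega}}=0$ follows from the orthogonality in~\eqref{cst:market_clear}. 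Since $\RNSP{\va{\mu}}$ is convex with affine constraints, this proves~(i).

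For~(ii), observe first that $\va{\mu}\in\probset$ gives $\check{\FF}\bc{\va{W}_{sp}}\leq\besp{\va{\mu}}{\va{W}_{sp}}$ at every decision feasible for $\RASP{\check{\FF}}$, whose feasible set coincides with that of $\RNSP{\va{\mu}}$; hence by~(i) the optimal value of $\RASP{\check{\FF}}$ is at most $\besp{\va{\mu}}{\overline{W}}$, where $\overline{W}$ denotes $\va{W}_{sp}$ evaluated at $\np{x^{\opt},\mathbf{x}_{r}^{\opt},\mathbf{y}^{\opt}}$. As $\np{x^{\opt},\mathbf{x}_{r}^{\opt},\mathbf{y}^{\opt}}$ is itself feasible for $\RASP{\check{\FF}}$ (again by the left inequality of~\eqref{cst:market_clear}), it remains to show $\check{\FF}\bc{\overline{W}}=\besp{\va{\mu}}{\overline{W}}$, i.e.\ that $\va{\mu}$ attains $\min_{k}S_{k}$ with $S_{k}:=\besp{\probelement_{k}}{\overline{W}}$. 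Write $f_{k},g_{k}$ for the right-hand sides of~\eqref{cst:lambda_k} and~\eqref{cst:sigma_k} at the solution, and $\theta,\phi$ for the optimal cut values, so $\theta=\min_{k}f_{k}$, $\phi=\min_{k}g_{k}$ and, by complementary slackness, $\lambda_{k}f_{k}=\lambda_{k}\theta$ and $\sigma_{k}g_{k}=\sigma_{k}\phi$ for all $k$. Two pointwise facts drive the rest: by~\eqref{cst:market_clear}, $\va{\pi}\np{\omega}\bp{x^{\opt}+\mathbf{x}_{r}^{\opt}\np{\omega}-\mathbf{y}^{\opt}\np{\omega}}=0$, so the price transfers cancel and $f_{k}+g_{k}=S_{k}+\besp{\probelement_{k}}{\mathbf{a}+\mathbf{b}}$ for every $k$; and by~\eqref{cst:mu_clear}, $\mathbf{a}\np{\omega}+\mathbf{b}\np{\omega}\leq0$ pointwise while $\besp{\va{\mu}}{\mathbf{a}+\mathbf{b}}=0$. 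From the first, $S_{k}=f_{k}+g_{k}-\besp{\probelement_{k}}{\mathbf{a}+\mathbf{b}}\geq f_{k}+g_{k}\geq\theta+\phi$ for \emph{every} $k$ (here $\besp{\probelement_{k}}{\mathbf{a}+\mathbf{b}}\leq0$ because $\probelement_{k}\geq0$), whence $\min_{k}S_{k}\geq\theta+\phi$. Conversely, summing $\lambda_{k}f_{k}=\lambda_{k}\theta$ over $k$ and using $\va{\mu}=\sum_{k}\lambda_{k}\probelement_{k}$ gives $\theta=\besp{\va{\mu}}{\va{W}_{p}+\va{\pi}\bp{x^{\opt}+\mathbf{x}_{r}^{\opt}}+\mathbf{a}}$, and symmetrically $\phi=\besp{\va{\mu}}{\va{W}_{c}-\va{\pi}\mathbf{y}^{\opt}+\mathbf{b}}$ using $\va{\mu}=\sum_{k}\sigma_{k}\probelement_{k}$; adding these and cancelling (once more by~\eqref{cst:market_clear} and~\eqref{cst:mu_clear}) yields $\theta+\phi=\besp{\va{\mu}}{\overline{W}}=\sum_{k}\lambda_{k}S_{k}\geq\min_{k}S_{k}$. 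The two chains force $\min_{k}S_{k}=\besp{\va{\mu}}{\overline{W}}=\check{\FF}\bc{\overline{W}}$, so $\np{x^{\opt},\mathbf{x}_{r}^{\opt},\mathbf{y}^{\opt}}$ solves the risk-averse social planning problem $\RASP{\check{\FF}}$ with worst-case measure $\va{\mu}$, which is~(ii).

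The KKT bookkeeping of the first two paragraphs is routine. The crux is the last step: one must notice that \emph{the same} measure $\va{\mu}$ is realized both as $\sum_{k}\lambda_{k}\probelement_{k}$ and as $\sum_{k}\sigma_{k}\probelement_{k}$, so that $\theta$ and $\phi$ can each be rewritten as a $\va{\mu}$-expectation of the relevant cut expression, and then to combine this with the pointwise content of~\eqref{cst:market_clear}--\eqref{cst:mu_clear} to trap $\besp{\va{\mu}}{\overline{W}}$ between $\theta+\phi$ and $\min_{k}S_{k}$. The one place where an inequality direction is used is $\besp{\probelement_{k}}{\mathbf{a}+\mathbf{b}}\leq0$, which is exactly what makes $S_{k}\geq\theta+\phi$ hold for all $k$, not merely for those in the support of $\lambda$ or of $\sigma$.
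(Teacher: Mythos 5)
Your proposal is correct and follows essentially the same route as the paper: KKT/stationarity in $\theta$, $\mathbf{a}$, $\phi$, $\mathbf{b}$ to identify $\va{\mu}=\sum_k\lambda_k\probelement_k=\sum_k\sigma_k\probelement_k$ for part (i), then complementary slackness on the cuts plus the market- and security-clearing conditions to trap $\besp{\va{\mu}}{\overline{W}}$ between $\theta+\phi$ and $\min_k S_k$ for part (ii), your extreme-point inequality $f_k+g_k\geq\theta+\phi$ being exactly the paper's superadditivity step $\min_{\QQ}+\min_{\QQ'}\leq\min_{\QQ}(\cdot+\cdot)$. The only difference is cosmetic: you spell out the multiplier $\va{\nu}=\va{\mu}\va{\pi}$ for the KKT system of $\RNSP{\va{\mu}}$, which the paper leaves implicit.
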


\begin{proof}
(i) \ Each agent problem is convex with linear constraints. Hence the optimal solution satisfies for each problem the Karush-Kuhn-Tucker (KKT) conditions.
The Lagrangian of the producer problem reads
\begin{equation*}
    \cL_p = \theta - \sum_{\omega \in \Omega }\mathbf{\mu}\np{\omega}\mathbf{a}\np{\omega} 
    + \sum_k \lambda_k \Bp{
    \EE_{\QQ_k}\bc{\va W_p + \va \price (x+\mathbf{ x}_r) + \mathbf{a}} - \theta }
    \eqfinv
\end{equation*}
where $\lambda_k$ is the multiplier associated to constraint~\eqref{cst:lambda_k}.
Then, the KKT conditions imply that $\sum_k \lambda_k = 1$, and 
$\va \mu = \sum_{k} \lambda_k \QQ_k$. In particular, $\va \mu$ is a probability measure in $\cQ$.
Furthermore $(x^{\opt}, \mathbf{x}_r^{\opt})$ maximizes $\sum_{\omega \in \Omega} \va \mu(\omega)  \bp{\va W_{p}(\omega) - \price(\omega)(x+\mathbf{x}_r(\omega)) }$ which is the risk-neutral producer objective evaluated with measure $\va \mu$. 

Similarly, looking at the consumer problem with multiplier $\sigma_k$ associated to constraint~\eqref{cst:sigma_k}, we obtain $\sum_k \sigma_k = 1$ and $\va \mu = \sum_{k} \sigma_k \QQ_k$. Hence, the consumer maximizes her risk-neutral objective under the same probability $\va \mu$ as the producer.

Since by hypothesis the solutions satisfy~\eqref{cst:market_clear}
we have that $\bp{x^{\opt},\mathbf{x}_{r}^{\opt}\np{\omega},\mathbf{y}^{\opt}\np{\omega}}$ solves \RNSP{$\va \mu$}.

(ii) Observe that complementary slackness gives%
\begin{equation*}
    \begin{array}{l}
    \lambda _{k}\left(\EE_{\QQ_k}\bc{\va{W}_{p}^{\opt} +
    \va \pi\bp{x^{\opt}+\mathbf{x}_{r}^{\opt}}+\bar{\mathbf{a}})}-\bar{\theta}\right) =0\eqfinv\\ 
    \sigma _{k}\left( \EE_{\QQ_k}\bc{
    \va{W}_{c}^{\opt}-\va \pi
    \mathbf{y}^{\opt}+\bar{\mathbf{b}}} -\bar{\varphi}\right) =0
    \eqfinv
    \end{array}
\end{equation*}
where $\va{W}_{p}^{\opt}$ and $\va{W}_{c}^{\opt}$ are defined by~\eqref{eq_def_welfares} in terms of $x^{\opt}$, $\mathbf{x}_{r}^{\opt}$ and $\mathbf{y}^{\opt}$.
Summing over $k$, and leveraging~\eqref{cst:mu_clear} gives 
\begin{align}
    \bar{\theta}+\bar{\varphi} 
    &=
    \nesp{\va{\mu}}{
        \va{W}_{p}^{\opt}+\va{\price}\bp{x^{\opt}+\mathbf{x}_{r}^{\opt}}+\bar{\mathbf{a}}
    } 
    +\nesp{\va{\mu}}{
        \va{W}_{c}^{\opt}-\va{\price}\bar{y}+\bar{\mathbf{b}}
    }   
    \nonumber
    \eqfinv
    \\
    &=
    \nesp{\va{\mu}}{\va{W}_{p}^{\opt}+\va{W}_{c}^{\opt}} 
    \eqfinp
    \label{eqn:thetaphi}
\end{align}
However as%
\begin{eqnarray}
    \bar{\theta}+\bar{\varphi} &=&\min_{\va{\QQ} \in \probset}\nesp{\QQ}{ \va{W}_{p}^{\opt} + \va{\price}\bp{x^{\opt}+\mathbf{x}_{r}^{\opt}}+\bar{\mathbf{a}}}   \notag \\
    &&+\min_{\QQ' \in \probset}\nesp{\va{\QQ'}}{\va{W}_{c}^{\opt}-\va{\price}\mathbf{y}^{\opt}+\bar{\mathbf{b}}}   
    \eqfinv
    \notag \\
    &\leq &\min_{\QQ \in \probset}
    \nesp{\QQ}{\va{W}_{p}^{\opt}+\va{W}_{c}^{\opt}+\bar{\mathbf{a}}+\bar{\mathbf{b}}}
    \eqfinv
    \notag \\
    &\leq &\min_{\QQ \in \probset}\nesp{\QQ}{\va{W}_{p}^{\opt}+\va{W}_{c}^{\opt}}
    \eqfinp
    \label{eqn:thetaphi1}
\end{eqnarray}%
Combining (\ref{eqn:thetaphi}) and (\ref{eqn:thetaphi1}) and observing that $%
\mu \in \probset$, we have 
\begin{align}\label{eqn:mumin}
    \nesp{\va{\mu}}{\va{W}_{p}^{\opt}+\va{W}_{c}^{\opt}}  
    =
    \min_{\probelement \in \probset}\nesp{\probelement}{ \va{W}_{p}^{\opt}+\va{W}_{c}^{\opt}}.
\end{align}%
To complete the proof, consider any feasible $x,\mathbf{x}_{r}\np{\omega},\mathbf{y}\np{\omega}$. By
part (i) and $\va{\mu} \in \probset$, we have 
\begin{align*}
    \nesp{\va{\mu}}{\va{W}_{p}^{\opt}+\va{W}_{c}^{\opt}} &\geq \nesp{\va{\mu}}{\va{W}_{p}+\va{W}_{c}} \geq \min_{\QQ \in \probset}
    \nesp{\QQ}{ \va{W}_{p}+\va{W}_{c}}
    \eqfinv
\end{align*}
where $\va{W}_{p}$ and $\va{W}_{c}$ are defined by~\eqref{eq_def_welfares}.
Thus (\ref{eqn:mumin}) gives%
\begin{equation*}
    \min_{\QQ \in \probset}
    \nesp{\QQ}{\va{W}_{p}^{\opt}+\va{W}_{c}^{\opt}}
    \geq 
    \min_{\QQ \in \probset}
    \nesp{\QQ}{\va{W}_{p}+\va{W}_{c}}
    \eqfinp
\end{equation*}%
This shows that  
\begin{equation*}
    \bp{x^{\opt},\mathbf{x}_{r}^{\opt},\mathbf{y}^{\opt}}\in \argmax_{x,\mathbf{x}_{r},\mathbf{y}}
    \min_{\QQ \in \probset}
    \nesp{\QQ}{\va{W}_{p}+\va{W}_{c}}
    \eqfinv
\end{equation*}
as required.
\end{proof}

\begin{remark}
Note that an equilibrium of $\RAAD{\check{\FF}}$ consists of a price vector $\va \price$, giving one price per scenario, and a probability $\va \mu$ that is seen by both the producer and the consumer as a worst-case probability for the welfare plus trade evaluation. 
\end{remark}

\begin{remark}
    In Section~\ref{sec:multiple_eq} we give an example
    of three risked equilibrium without Arrow-Debreu securities, each
    corresponding to a risk-neutral equilibrium with different measure $\va{\mu}\np{\omega}$. However if Arrow-Debreu securities are included then two of these equilibria are no longer equilibria in a risk-averse setting. The
    risk-averse consumer, who without Arrow-Debreu securities had no mechanism
    to alter his outcomes will trade these securities to improve their
    risk-adjusted payoff.
\end{remark}

\begin{remark}
    Consider a set of prices $\va{\pi}$ that gives a risked equilibrium in which
    agent $i$ has payoff $\va{W}_{i}\np{\va{\pi}}$ and risked payoff 
    $\bmes{i}{\va{W}_{i}\np{\va{\pi}}}$. Suppose that there exists a
    probability measure $\probelement^{\ast}$ such that 
    $\bmes{i}{\va{W}_{i}\np{\va{\pi}}} = \Besp{\probelement^{\ast}}{\va{W}_{i}\np{\va{\pi}}}$. 
    Observe that this does not imply that choosing actions $x$ to
    maximize $\nesp{\probelement^{\ast}}{\va{W}_{i}\np{\va{\pi}}}$ will give 
    $\max_{x} \bmes{i}{\va{W}_{i}\np{\va{\pi}}}$. This is because $x^{\ast }$ solves 
    \begin{equation*}
        \max_{x}
        \bmes{i}{\va{W}_{i}\np{\va{\price}}}
        =
        \max_{x} \min_{\probelement \in \probset}
        \besp{\probelement}{\va{W}_{i}\np{\va{\price}}}
        \eqfinv
    \end{equation*}
    
    and not 
    \begin{equation*}
        \max_{x}
        \besp{\probelement^{\ast}}{f_{i}\np{x,\va{\pi}}}
        \eqfinv
    \end{equation*}
    since $\probelement^{\ast}$ depends on $x$.
\end{remark}

\begin{remark}
    Proposition~\ref{prop_RASP_RAEQAD} is easily extended to the case where the agents have different risk measures $\mesr_{p}$ and $\mesr_{c}$ with non-disjoint risk set. In this case, \eqref{eqn:thetaphi1} becomes 
    \begin{eqnarray}
        \bar{\theta}+\bar{\varphi} &=&\min_{\probelement_{p} \in \probset_{p}}\nesp{\QQ_{p}}{ \va{\price}\bp{x^{\opt}+\mathbf{x}_{r}^{\opt}}+\va{W}_{p}^{\opt}+\bar{\mathbf{a}}}   \notag \\
        &&+\min_{\probelement_{c} \in \probset_{c}}\nesp{\probelement_{c}}{\va{W}_{c}^{\opt}-\va{\price}\mathbf{y}^{\opt}+\bar{\mathbf{b}}}   \notag
        \eqfinv
        \\
        &\leq &\min_{\QQ \in \probset_{p} \cap \probset_{c}}\nesp{\QQ}{\va{W}_{c}^{\opt}+\va{W}_{p}^{\opt}}
        \eqfinv
    \end{eqnarray}
    and the social planner uses a risk measure with $\probset = \probset_{p} \cap \probset{c}$.
\end{remark}

The following proposition (Theorem 11 \citet{PhilpottFerrisWets16}) stands as a reverse statement for Proposition~\ref{prop_RASP_RAEQAD}. 
\begin{prop}\label{prop_article_Andy}
    Let the elements $x^{\opt}$, $\mathbf{x}_{r}^{\opt}$ and $\mathbf{y}_r^{\opt}$ be optimal solutions to $\RASP{\check{\FF}}$, with associated worst case probability measure $\va{\mu}$.  Then there exists prices $\va{\price}$ such that the couple $\np{\va{\price},\va{\mu}}$ forms a risk trading equilibrium for $\RAAD{\check{\FF}}$ with associated optimal solutions $\np{x^{\opt},\mathbf{x}_{r}^{\opt},\mathbf{y}^{\opt}}$. 
\end{prop}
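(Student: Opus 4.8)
The plan is to run the argument of Proposition~\ref{prop_RASP_RAEQAD} backwards, with Proposition~\ref{prop_equivalence_RNSP_RNEQ} serving as a bridge: price the Arrow--Debreu securities at the planner's worst-case measure $\va{\mu}$, and hand each agent, through those securities, exactly one half of the social welfare.

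\emph{Step 1 (reduce the planner problem to a risk-neutral one and extract energy prices).} First I would write the KKT conditions of $\RASP{\check{\FF}}$ at the optimal $\np{\theta^{\opt},x^{\opt},\mathbf{x}_{r}^{\opt},\mathbf{y}^{\opt}}$, with multipliers $\lambda_{k}\geq 0$ on the cuts $\theta\leq\besp{\QQ_{k}}{\va{W}_{sp}}$ and $\nu\np{\omega}\geq 0$ on $x+\mathbf{x}_{r}\np{\omega}\geq\mathbf{y}\np{\omega}$. Stationarity in $\theta$ forces $\sum_{k}\lambda_{k}=1$, so $\va{\mu}:=\sum_{k}\lambda_{k}\QQ_{k}$ lies in $\cQ$; complementary slackness gives $\lambda_{k}>0\Rightarrow\besp{\QQ_{k}}{\va{W}_{sp}^{\opt}}=\theta^{\opt}$, hence $\besp{\va{\mu}}{\va{W}_{sp}^{\opt}}=\theta^{\opt}=\check{\FF}\bc{\va{W}_{sp}^{\opt}}$ where $\va{W}_{sp}^{\opt}:=\va{W}_{p}^{\opt}+\va{W}_{c}^{\opt}$, i.e. $\va{\mu}$ is a worst-case measure for $\va{W}_{sp}^{\opt}$. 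The remaining stationarity conditions (in $x$, $\mathbf{x}_{r}\np{\omega}$, $\mathbf{y}\np{\omega}$) are exactly those of $\RNSP{\va{\mu}}$ with the same $\nu$, and since $\RNSP{\va{\mu}}$ is concave with affine constraints this shows $\np{x^{\opt},\mathbf{x}_{r}^{\opt},\mathbf{y}^{\opt}}$ solves $\RNSP{\va{\mu}}$. Proposition~\ref{prop_equivalence_RNSP_RNEQ} then produces energy prices $\va{\price}$ making $\np{x^{\opt},\mathbf{x}_{r}^{\opt},\mathbf{y}^{\opt},\va{\price}}$ a risk-neutral equilibrium $\RNEQ{\va{\mu}}$; in particular~\eqref{cst:market_clear} holds, so $\va{\price}\np{\omega}\bp{x^{\opt}+\mathbf{x}_{r}^{\opt}\np{\omega}-\mathbf{y}^{\opt}\np{\omega}}=0$ for every $\omega$, and $\np{x^{\opt},\mathbf{x}_{r}^{\opt}}$, $\mathbf{y}^{\opt}$ maximise $\besp{\va{\mu}}{\va{W}_{p}+\va{\price}\np{x+\mathbf{x}_{r}}}$ and $\besp{\va{\mu}}{\va{W}_{c}-\va{\price}\mathbf{y}}$ respectively.

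\emph{Step 2 (the candidate equilibrium for $\RAAD{\check{\FF}}$).} Keep $\va{\price}$, price the securities at $\va{\mu}$, and set $\mathbf{a}:=\tfrac12\va{W}_{sp}^{\opt}-\va{W}_{p}^{\opt}-\va{\price}\bp{x^{\opt}+\mathbf{x}_{r}^{\opt}}$, $\mathbf{b}:=-\mathbf{a}$, and $\theta:=\varphi:=\tfrac12\theta^{\opt}$. Using $\va{W}_{sp}^{\opt}=\va{W}_{p}^{\opt}+\va{W}_{c}^{\opt}$ and $\va{\price}\bp{x^{\opt}+\mathbf{x}_{r}^{\opt}-\mathbf{y}^{\opt}}=0$ from Step~1, a direct computation shows that both post-trade positions $\va{W}_{p}^{\opt}+\va{\price}\bp{x^{\opt}+\mathbf{x}_{r}^{\opt}}+\mathbf{a}$ and $\va{W}_{c}^{\opt}-\va{\price}\mathbf{y}^{\opt}+\mathbf{b}$ equal $\tfrac12\va{W}_{sp}^{\opt}$, while $-\mathbf{a}\np{\omega}-\mathbf{b}\np{\omega}=0$ for all $\omega$. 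Then I verify optimality of the two agent blocks: eliminating $\theta$, the producer's block reads $\max_{x,\mathbf{x}_{r},\mathbf{a}}\check{\FF}\bc{\va{W}_{p}+\va{\price}\np{x+\mathbf{x}_{r}}+\mathbf{a}}-\besp{\va{\mu}}{\mathbf{a}}$, and since $\va{\mu}\in\cQ$ gives $\check{\FF}\bc{\va{Z}}\leq\besp{\va{\mu}}{\va{Z}}$, this objective is bounded above by $\besp{\va{\mu}}{\va{W}_{p}+\va{\price}\np{x+\mathbf{x}_{r}}}$, which by Step~1 is maximised at $\np{x^{\opt},\mathbf{x}_{r}^{\opt}}$; at the choice above the post-trade position is $\tfrac12\va{W}_{sp}^{\opt}$, whose worst case is still $\va{\mu}$, so $\check{\FF}\bc{\tfrac12\va{W}_{sp}^{\opt}}=\besp{\va{\mu}}{\tfrac12\va{W}_{sp}^{\opt}}=\tfrac12\theta^{\opt}$ and the objective there equals exactly $\besp{\va{\mu}}{\va{W}_{p}^{\opt}+\va{\price}\bp{x^{\opt}+\mathbf{x}_{r}^{\opt}}}$, i.e. the upper bound. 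Hence $\np{x^{\opt},\mathbf{x}_{r}^{\opt},\mathbf{a},\theta}$ solves the producer's block, with~\eqref{cst:lambda_k} tight at every $k$; the consumer's block~\eqref{cst:sigma_k} is handled symmetrically, yielding $\np{\mathbf{y}^{\opt},\mathbf{b},\varphi}$. Since~\eqref{cst:market_clear} comes from Step~1 and~\eqref{cst:mu_clear} holds because $-\mathbf{a}-\mathbf{b}\equiv 0$, the pair $\np{\va{\price},\va{\mu}}$ is a risk-trading equilibrium with the claimed optimal solutions.

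\emph{Where the work is.} Two points carry the proof and the rest is routine: the KKT bookkeeping of Step~1, showing that a risk-averse planner optimum is risk-\emph{neutral}-optimal under its own worst-case measure so that Proposition~\ref{prop_equivalence_RNSP_RNEQ} applies; and the choice in Step~2 of splitting $\va{W}_{sp}^{\opt}$ equally between the two agents' security positions, which simultaneously makes $\va{\mu}$ the common worst case (by positive homogeneity of the $\min$ in $\check{\FF}$) and clears the risk market with $-\mathbf{a}-\mathbf{b}=0$. Beyond these I expect no genuine obstacle; everything else is the domination $\check{\FF}\leq\besp{\va{\mu}}{\,\cdot\,}$ and algebra. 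The ``equal split'' step uses that $\omeg$ is finite, so that $\tfrac12\va{W}_{sp}^{\opt}$ is attainable as an Arrow--Debreu portfolio payoff, which is precisely the restricted setting of the proposition.
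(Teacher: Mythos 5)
Your construction is correct, and it is worth noting that the paper itself does not prove this proposition --- it is imported as Theorem~11 of \citet{PhilpottFerrisWets16} --- so yours is a genuine self-contained argument rather than a variant of one in the text. The two load-bearing steps both check out: (a) the KKT system of $\RASP{\check{\FF}}$ with $\sum_k\lambda_k=1$ and $\va{\mu}=\sum_k\lambda_k\QQ_k$ coincides, in the $(x,\mathbf{x}_r,\mathbf{y})$ block, with that of the concave problem $\RNSP{\va{\mu}}$, so Proposition~\ref{prop_equivalence_RNSP_RNEQ} supplies prices $\va{\price}$ with $\va{\price}\bp{x^{\opt}+\mathbf{x}_r^{\opt}-\mathbf{y}^{\opt}}\equiv 0$; and (b) with $\mathbf{a}+\mathbf{b}\equiv 0$ and both post-trade positions equal to $\tfrac12\va{W}_{sp}^{\opt}$, positive homogeneity gives $\check{\FF}\bc{\tfrac12\va{W}_{sp}^{\opt}}=\besp{\va{\mu}}{\tfrac12\va{W}_{sp}^{\opt}}$, so the domination $\check{\FF}\leq\besp{\va{\mu}}{\cdot}$ is saturated and each agent attains the risk-neutral upper bound $\besp{\va{\mu}}{\va{W}_{p}+\va{\price}\np{x+\mathbf{x}_r}}$ (resp. $\besp{\va{\mu}}{\va{W}_{c}-\va{\price}\mathbf{y}}$) established in (a). This is essentially the same duality-based mechanism as the proof of Proposition~\ref{prop_RASP_RAEQAD} run in reverse, with the proportional split of $\va{W}_{sp}^{\opt}$ replacing the more common device of trading to a deterministic position (which would not clear the security market pointwise here). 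Two cosmetic points: your claim that~\eqref{cst:lambda_k} is tight at every $k$ is not needed and not generally true --- only the cuts indexed by $k$ with $\lambda_k>0$ are tight, which suffices; and, as everywhere in the paper, the argument implicitly assumes $\va{\mu}\np{\omega}>0$ for all $\omega$ so that the risk-neutral equilibrium under $\va{\mu}$ pins down prices and quantities in every scenario.
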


Combining Proposition~\ref{prop_RASP_RAEQAD} and Proposition~\ref{prop_article_Andy}, we are able to state the following result of uniqueness of equilibrium.

\begin{cor}
     If both the producer's and consumer's criterion are strictly concave, and if each of the extreme points $\probelement_{k}$ charges all $\omega$, then $\RASP{\check{\FF}}$ admits a unique solution $(x^{\opt}, \mathbf{x}_{r}^{\opt}, \mathbf{y}^{\opt})$. Furthermore $\RAAD{\check{\FF}}$ admits unique optimal decisions $(x^{\opt}, \mathbf{x}_{r}^{\opt}, \mathbf{y}^{\opt})$. If, in addition, solving $\RASP{\check{\FF}}$ admit a unique worst case probability measure $\va{\mu}$, then equilibrium prices $\np{\va{\price},\va{\mu}}$ are unique.
\end{cor}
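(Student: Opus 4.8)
The plan is to mirror Corollary~\ref{cor_uniqueness_risk_neutral}, replacing the use of Proposition~\ref{prop_equivalence_RNSP_RNEQ} by Propositions~\ref{prop_RASP_RAEQAD} and~\ref{prop_article_Andy}, and to extract the price part from the consumer's first-order condition exactly as there.

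First I would prove uniqueness of the $\RASP{\check{\FF}}$ solution. For every extreme point $\probelement_{k}$, the map $(x,\mathbf{x}_{r},\mathbf{y})\mapsto\besp{\probelement_{k}}{\va{W}_{sp}}$ is the sum over $\omega\in\omeg$ of the strictly concave producer and consumer criteria weighted by $\probelement_{k}(\omega)>0$; since $x$ enters every scenario term and each $\mathbf{x}_{r}(\omega)$, $\mathbf{y}(\omega)$ enters the $\omega$-term with strictly positive weight, this map is strictly concave jointly in all decisions. The objective of $\RASP{\check{\FF}}$ is $\min_{k\in\nce{1;K}}\besp{\probelement_{k}}{\va{W}_{sp}}$, and a pointwise minimum of finitely many strictly concave functions is strictly concave (on a segment joining two distinct points, take the index attaining the minimum at the midpoint and invoke its strict concavity). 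Maximizing a strictly concave function over the nonempty convex polyhedron $\na{x+\mathbf{x}_{r}(\omega)\geq\mathbf{y}(\omega)}$ has at most one maximizer, and existence of a maximizer is clear from coercivity of the strictly concave quadratic welfare, so $(x^{\opt},\mathbf{x}_{r}^{\opt},\mathbf{y}^{\opt})$ is unique.

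Next, $\RAAD{\check{\FF}}$ has an equilibrium by Proposition~\ref{prop_article_Andy} (applied to the unique $\RASP{\check{\FF}}$ solution and one of its worst-case measures), and by Proposition~\ref{prop_RASP_RAEQAD}(ii) the decisions of any equilibrium of $\RAAD{\check{\FF}}$ solve $\RASP{\check{\FF}}$; by the previous paragraph they must therefore equal $(x^{\opt},\mathbf{x}_{r}^{\opt},\mathbf{y}^{\opt})$, which proves uniqueness of the optimal decisions. Now assume additionally that the worst-case measure of $\RASP{\check{\FF}}$ is unique, and let $(\va{\price},\va{\mu})$ be any equilibrium. The chain of (in)equalities in the proof of Proposition~\ref{prop_RASP_RAEQAD} leading to~\eqref{eqn:mumin} shows that $\va{\mu}$ attains $\min_{\probelement\in\probset}\nesp{\probelement}{\va{W}_{p}^{\opt}+\va{W}_{c}^{\opt}}$, i.e.\ $\va{\mu}$ is a worst-case measure for $\RASP{\check{\FF}}$ evaluated at its optimal decisions, hence uniquely determined by hypothesis. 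Writing $\va{\mu}=\sum_{k}\sigma_{k}\probelement_{k}$ with $\sigma_{k}\geq0$, $\sum_{k}\sigma_{k}=1$ (the consumer's KKT conditions from that proof) and using that every $\probelement_{k}$ charges all $\omega$, we get $\va{\mu}(\omega)>0$ for all $\omega$; the consumer's optimality condition in the free variable $\mathbf{y}(\omega)$ then reads $\va{\mu}(\omega)\bp{\mathbf{V}(\omega)-\mathbf{r}(\omega)\mathbf{y}^{\opt}(\omega)-\va{\price}(\omega)}=0$, so $\va{\price}(\omega)=\mathbf{V}(\omega)-\mathbf{r}(\omega)\mathbf{y}^{\opt}(\omega)$ is pinned down by the unique $\mathbf{y}^{\opt}$, and $(\va{\price},\va{\mu})$ is unique.

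The delicate point I expect is the strict joint concavity of $\min_{k}\besp{\probelement_{k}}{\va{W}_{sp}}$: this is precisely where the hypothesis that every extreme point $\probelement_{k}$ charges all $\omega$ is needed, since otherwise the active measure could give zero weight to some $\mathbf{x}_{r}(\omega)$ or $\mathbf{y}(\omega)$ and uniqueness of decisions would fail. The same positivity $\va{\mu}(\omega)>0$ is what legitimizes reading $\va{\price}$ off the consumer's first-order condition in the last step; the rest is a routine transfer through Propositions~\ref{prop_RASP_RAEQAD} and~\ref{prop_article_Andy}.
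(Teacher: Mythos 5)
Your proposal is correct and follows essentially the same route as the paper: strict concavity of the pointwise minimum of the strictly concave per-extreme-point expectations (using that each $\probelement_{k}$ charges all $\omega$) gives uniqueness for $\RASP{\check{\FF}}$, Propositions~\ref{prop_article_Andy} and~\ref{prop_RASP_RAEQAD} transfer existence and decision uniqueness to $\RAAD{\check{\FF}}$, and the price is recovered from the consumer's first-order condition using $\va{\mu}\np{\omega}>0$ — the very step the paper delegates to Corollary~\ref{cor_uniqueness_risk_neutral}. Your write-up merely makes explicit details (the strict concavity of the min, the identification of the equilibrium $\va{\mu}$ with the worst-case measure via~\eqref{eqn:mumin}) that the paper asserts without elaboration.
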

\begin{proof}
As each of the extreme points $\probelement_{k}$ charges all $\omega$, the risk averse social planner problem is strictly convex with linear constraints.
Thus there exists a unique solution $(x^{\opt}, \mathbf{x}_{r}^{\opt}, \mathbf{y}^{\opt})$ attained for a worst case probability $\va{\mu}$. 
Applying Proposition~\ref{prop_article_Andy}, we know that there exists $\va{\price}$ such that $\np{\va{\price},\va{\mu}}$ forms a risk trading equilibrium.
Suppose now that there exists two risk-trading equilibria $\np{\va{\price}^{1},\va{\mu}^{1}, x^{1},\mathbf{x}_{r}^{1}, \mathbf{y}^{1}}$ and $\np{\va{\price}^{2},\va{\mu}^{2}, x^{2},\mathbf{x}_{r}^{2}, \mathbf{y}^{2}}$. 
Then, by Proposition~\ref{prop_RASP_RAEQAD}, they both solve $\RASP{\check{\FF}}$ which admits a unique solution.
Consequently, we have 
$x^{\opt} = x^{1}= x^{2}$, $\mathbf{x}_{r}^{\opt} = \mathbf{x}_{r}^{1}=\mathbf{x}_{r}^{2}$ and $\mathbf{y}^{\opt} = \mathbf{y}^{1}=\mathbf{y}^{2}$. 

If in addition, $\va{\mu}^{1} = \va{\mu}^{2}$, then by Corollary~\ref{cor_uniqueness_risk_neutral}, we deduce that $\va{\price}^{1} = \va{\price}^{2}$ which ends the proof. 
\end{proof}
We have shown a first equivalence between $\RNSP{\prbt}$ and $\RNEQ{\prbt}$ and a second one between $\RASP{\check{\FF}}$ and $\RAAD{\check{\FF}}$. These equivalences lead to uniqueness of equilibrium if there is 
uniqueness of the solution of the social planner. A natural question arises: if $\RASP{\check{\FF}}$ has a unique solution, is there a unique equilibrium for $\RAEQ{\check{\FF}}$? The next section provides a 
simple counterexample.

\section{Multiple 
risk averse
equilibrium}\label{sec:multiple_eq}

In this section, we present a toy problem where $\RASP{\check{\FF}}$ has a unique optimum but there are three different equilibria for $\RAEQ{\check{\FF}}$. They are first found numerically using classical methods (PATH solver and a tâtonnement algorithm), then derived analytically. An interesting point is that the equilibrium found by PATH is unstable.

Let $\omeg = \na{1,2}$ and $\probset = \mathrm{conv}\ba{\np{\frac{1}{4},\frac{3}{4}},\np{
\frac{3}{4},\frac{1}{4}}}$. For simplicity of notation index by $i \in \na{1,2}$ the realization of each random variable.
We choose the following parameters: $V_{1}=4$, $V_{2}=\frac{48}{5}$, $c=\frac{23}{2}$, $c_{1}=1$, $c_{2}=\frac{7}{2}$, $r_{1}=2$, $r_{2}=10$.

\subsection{Multiple equilibrium}
\subsubsection{PATH solver}
First we look for equilibrium using GAMS with the solver PATH in the EMP framework (See
\citet{brook1988gams}, \citet{ferris2009extended} and 
\citet{ferris2000complementarity}). 
We have run GAMS from different starting points defined by a grid $100 \times 100$ over the square $\nc{1.220; 1.255} \times \nc{2.05; 2.18}$. We always find an equilibrium defined by
\begin{align*}
    \va{\price} = \np{\price_1,\price_2}
    = 
    \np{1.23578;2.10953}
    \eqfinv
\end{align*}
leading to risked adjusted welfare $ \np{2.134; 0.821}$ for producer and consumer respectively.

\subsubsection{Walras tâtonnement}
We now compute the equilibrium using a tâtonnement algorithm (See \citet{uzawa1960walras}).

\IncMargin{1em}
\begin{algorithm}
    \SetAlgoLined
    \KwData{MAX-ITER, $\np{\price_{1}^{0},\price_{2}^{0}},\tau$} 
    \For{$k$ from $0$ to MAX-ITER}{
        \emph{Compute an optimal decision for each player given a price} :\\
            $\qquad x, x_1,x_2 \in \argmax \bmes{}{\va{W}_{p}+\va{\price}\np{x+\mathbf{x}_{r}}}$\;
            $\qquad y_{1},y_{2} \in \argmax \nmes{}{\va{W}_{c}-\va{\price}\mathbf{y}}$\;
        \emph{Update the price} : \\
        $\qquad \price_{1} = \price_{1} - \tau\max\ba{0; y_{1}-\np{x+x_{1}}}$\;
        $\qquad \price_{2} = \price_{2} - \tau\max\ba{0; y_{2}-\np{x+x_{2}}}$\;
    }
    \KwRet{$\np{\price_{1},\price_{2}}$}
 \caption{Walras tâtonnement}
 \label{algo_walras}
\end{algorithm}
\DecMargin{1em}

Running algorithm~\ref{algo_walras} starting from $\np{1.25;2.06}$, respectively from $\np{1.22;2.18}$, with $100$ iterations and a step size of $0.1$, we find two new equilibria: 
\begin{align*}
    \va{\price}
    = 
    \np{1.2256;2.0698}
    \textrm{ and }
    \va{\price}
    = 
    \np{1.2478;2.1564}
    \eqfinv
\end{align*}
leading to risked-adjusted welfare for producer and consumer respectively $ \np{2.152; 0.798}$ and $\np{2.113; 0.845}$.  Notice that neither equilibrium dominates the other.

An alternative tatônnement method called FastMarket (see \citet{facchinei2007generalized}) finds the same equilibrium.

\subsection{Analytical results}
We now compute the three equilibrium analytically. Details of the computation are in~\ref{app:analytical}.

Consider two probabilities $\np{\underline{p}, 1-\underline{p}}$ and $\np{\bar{p}, 1-\bar{p}}$
Given prices $0 < \price_{1} < \price_{2}$, we solve the producer (resp. consumer) optimization problem.
Optimal decisions are derived in~\ref{appendix_optimal_control} and summed up in Table~\ref{tab:prod_solution} where $x_c$ is given by
\begin{equation*}
    x_{c}\np{\va{\price}} 
    = 
    \frac{1}{2\np{\price_{1} - \price_{2}}}
    \vardelim{
        \frac{\price_{2}^{2}}{c_{2}} 
        - \frac{\price_{1}^{2}}{c_{1}}
    } \eqfinp
\end{equation*}

\begin{table}
\centering
\begin{tabular}{| c | c | c c c |}
\hline
 & condition  & $ x^\sharp$ & $x_i^\sharp$ & $y_i^\sharp$  \\
   \hline case a) &
$x_c \leq \frac{\besp{\bar{p}}{\va{\price}}}{c}$ &
$\frac{\besp{\bar{p}}{\va{\price}}}{c}$&
$\frac{\price_i}{c_i}$ 
& $\frac{V_i - \price_i}{r_i}$\\
case b) &
$\frac{\besp{\bar{p}}{\va{\price}}}{c} \leq x_c \leq \frac{\besp{\underline{p}}{\va{\price}}}{c}$ &
 $x_c$ &
$\frac{\price_i}{c_i}$ 
& $\frac{V_i - \price_i}{r_i}$\\
case c) &
$\frac{\besp{\underline{p}}{\va{\price}}}{c} \leq x_c$&
$\frac{\besp{\underline{p}}{\va{\price}}}{c}$ &
$\frac{\price_i}{c_i}$ 
& $\frac{V_i - \price_i}{r_i}$\\
\hline
\end{tabular}
\caption{Optimal control for producer and consumer problems
\label{tab:prod_solution}}
\end{table}

We see that there are three regimes, depending only on the prices $(\price_1,\price_2)$,  of optimal first stage solutions. Case a) (resp. case c)), corresponds to a set of prices such
that $\EE_{\bar p}[\va W_p] < \EE_{\underline{p}}[\va W_p]$ (resp. $\EE_{\bar p}[\va W_p] > \EE_{\underline{p}}[\va W_p]$), and the optimal decision corresponds to an optimal risk-neutral decision with respect to one of the two extreme points of $\cQ$.
On the other hand, case b) corresponds to a set of prices such that the expected welfare is equivalent for all probability in $\cQ$, i.e. $\EE_{\bar p}[\va W_p] = \EE_{\underline{p}}[\va W_p]$.
In Figure~\ref{fig_analytical_solution}, the red area
corresponds to case a), the blue to case b) and the red to case c), separated by black lines of equations $\frac{\nesp{\bar{p}}{\va{\price}}}{c} = x_{c}\np{\va{\price}}$ and $\frac{\nesp{\underline{p}}{\va{\price}}}{c} = x_{c}\np{\va{\price}}$ respectively. 

We are now looking for prices $(\price_1,\price_2)$ such that the complementarity constraints are satisfied. 
For strictly positive prices, these constraints can be summed up as 
\begin{equation*}
    z_i\np{\va{\price}} =  x^{\opt}\np{\va{\price}} + x_i^{\opt}\np{\va{\price}} - y_i^{\opt}\np{\va{\price}} = 0 \eqsepv \qquad i \in \na{1,2}.
\end{equation*}

Accordingly we define excess supply functions $z^l_i$ for case $l \in \na{a,b,c}$, and $i \in \na{1,2}$. The red, blue and green lines corresponds to manifolds of null excess supply function for scenario $i$, that is of prices such that $z^l_i(\price_1,\price_2)=0$. When the lines cross we have $z^1_l = z^2_l = 0$, and thus we have candidate equilibrium. If the lines cross in the area of the same color we have an equilibrium. This is the case with the parameters chosen, and equilibrium can be derived in exact arithmetic.

We end with a few remarks derived from this example.

\begin{remark}
The PATH solver finds the blue equilibrium, Algorithm~\ref{algo_walras} finds the green and the red equilibrium as illustrated by Figure~\ref{fig_equil}. Interestingly it can be shown that the blue equilibrium is unstable in the sense that the dynamical system driven by $\price' = z(\price)$ is not locally stable (see~\cite{samuelson1941stability}) around the blue equilibrium (see~\ref{app:stability}).
\end{remark}

\begin{remark}
No equilibrium dominates another: if going from one equilibrium to another increases the (risk-adjusted) welfare of one agent, then it decreases the (risk-adjusted) welfare of the other.
\end{remark}

\begin{remark}
Using the analytical results we check that there exists a set of non-zero Lebesgue measure of parameters $V_{1}, V_{2}, c,c_{1},c_{2},r_{1}$, and $r_{2}$ (albeit small), that have three distinct equilibria with the same properties.
\end{remark}

\begin{remark}
We can show that the blue equilibrium is a convex combination of red and green equilibrium, illustrated on Figure~\ref{fig_analytical_solution} by the dashed blue line.
\end{remark}
\begin{figure}[ht]
    \centering
    \includegraphics[width=0.9\linewidth]{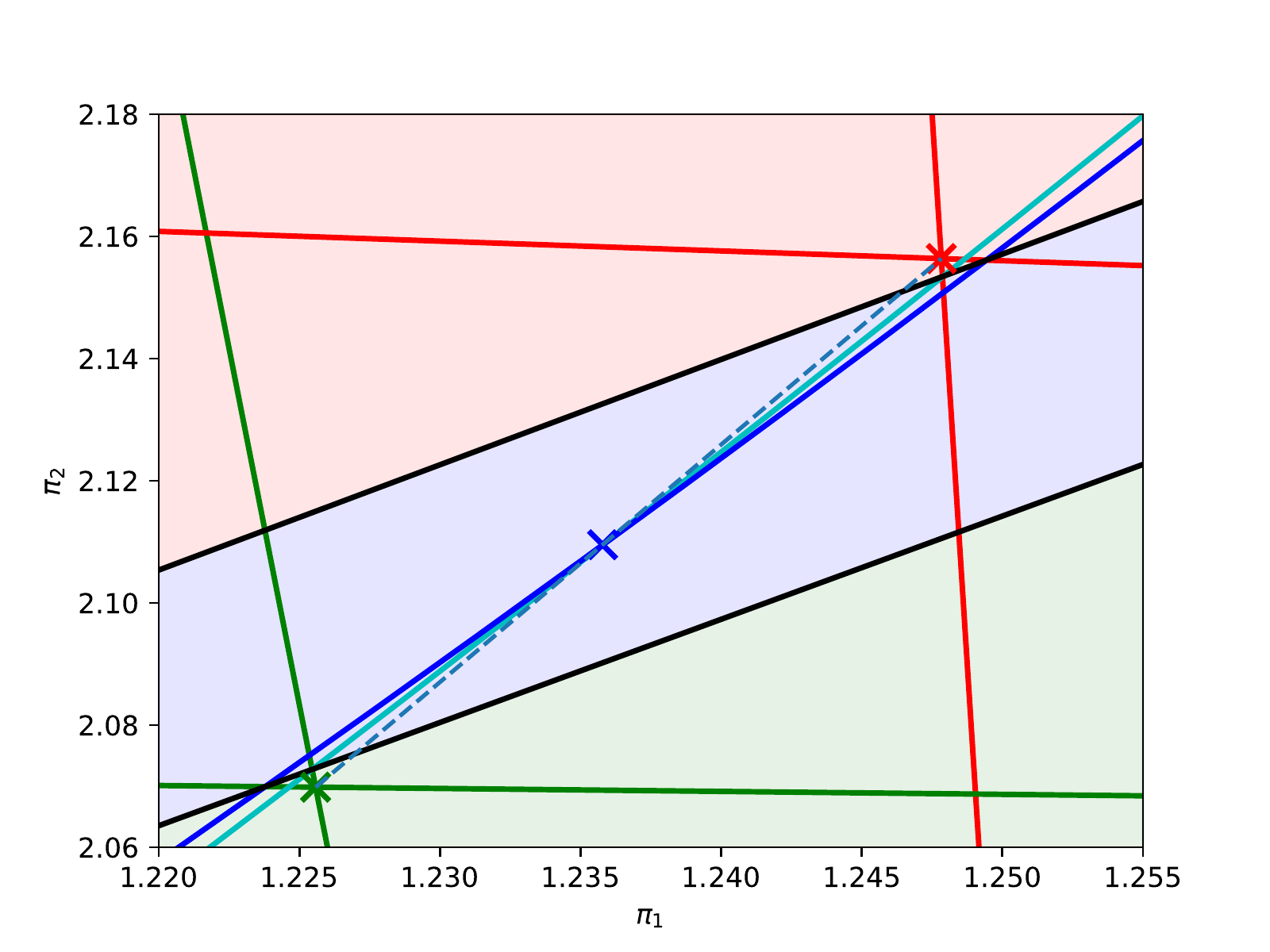}
    \caption{Null excess function per scenario manifold for
    $V_{1}=4$, $V_{2}=\frac{48}{5}$, $c=\frac{23}{2}$, $c_{1}=1$, $c_{2}=\frac{7}{2}$, $r_{1}=2$, $r_{2}=10$.}
    \label{fig_analytical_solution}
\end{figure}
\begin{figure*}[htbp]
\centering
\subfloat[around green equilibrium.\label{fig_equil_green}]{\includegraphics[width=0.3\textwidth]{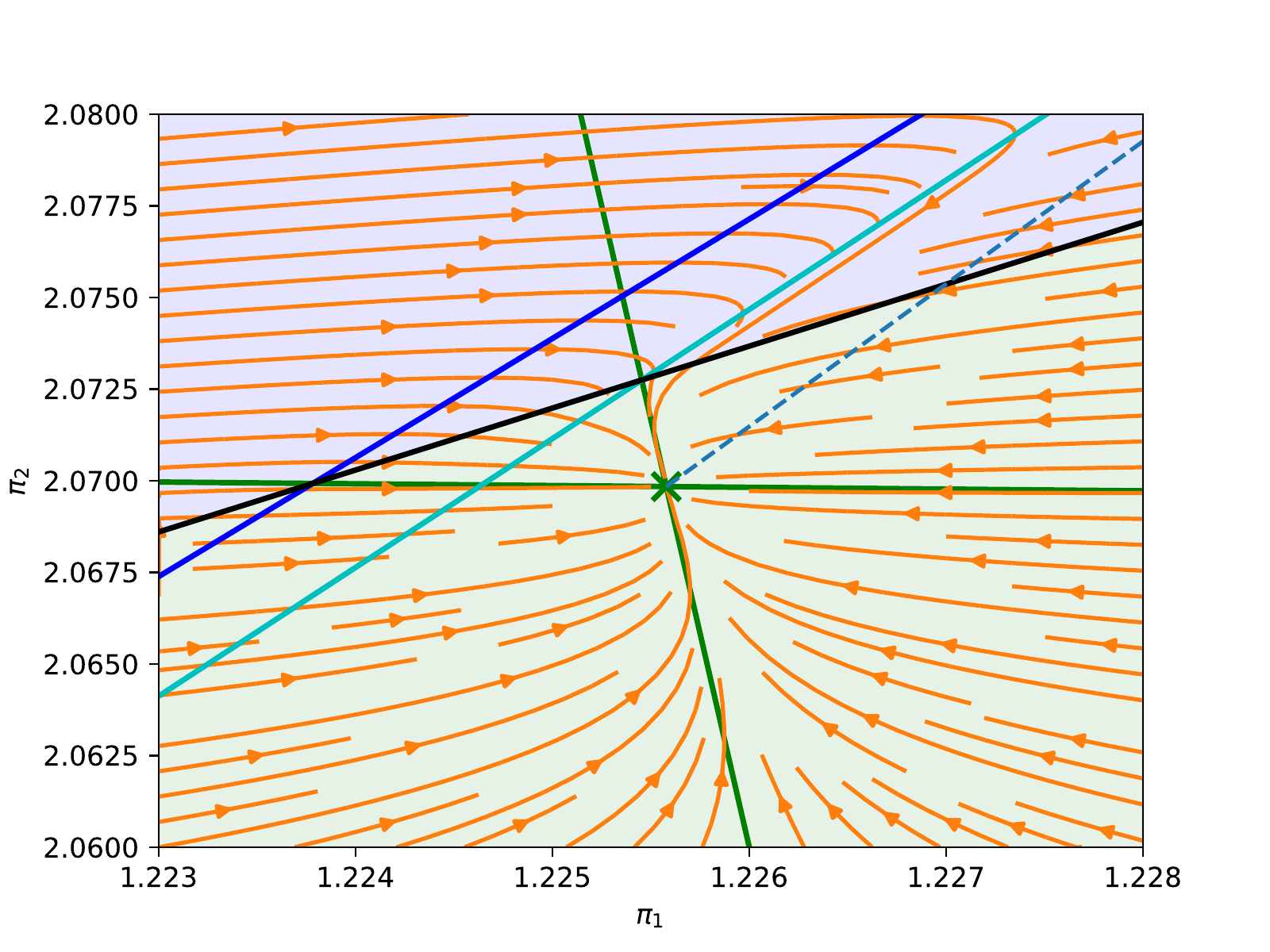}}\hfill
\subfloat[around blue equilibrium.\label{fig_equil_blue}] {\includegraphics[width=0.3\textwidth]{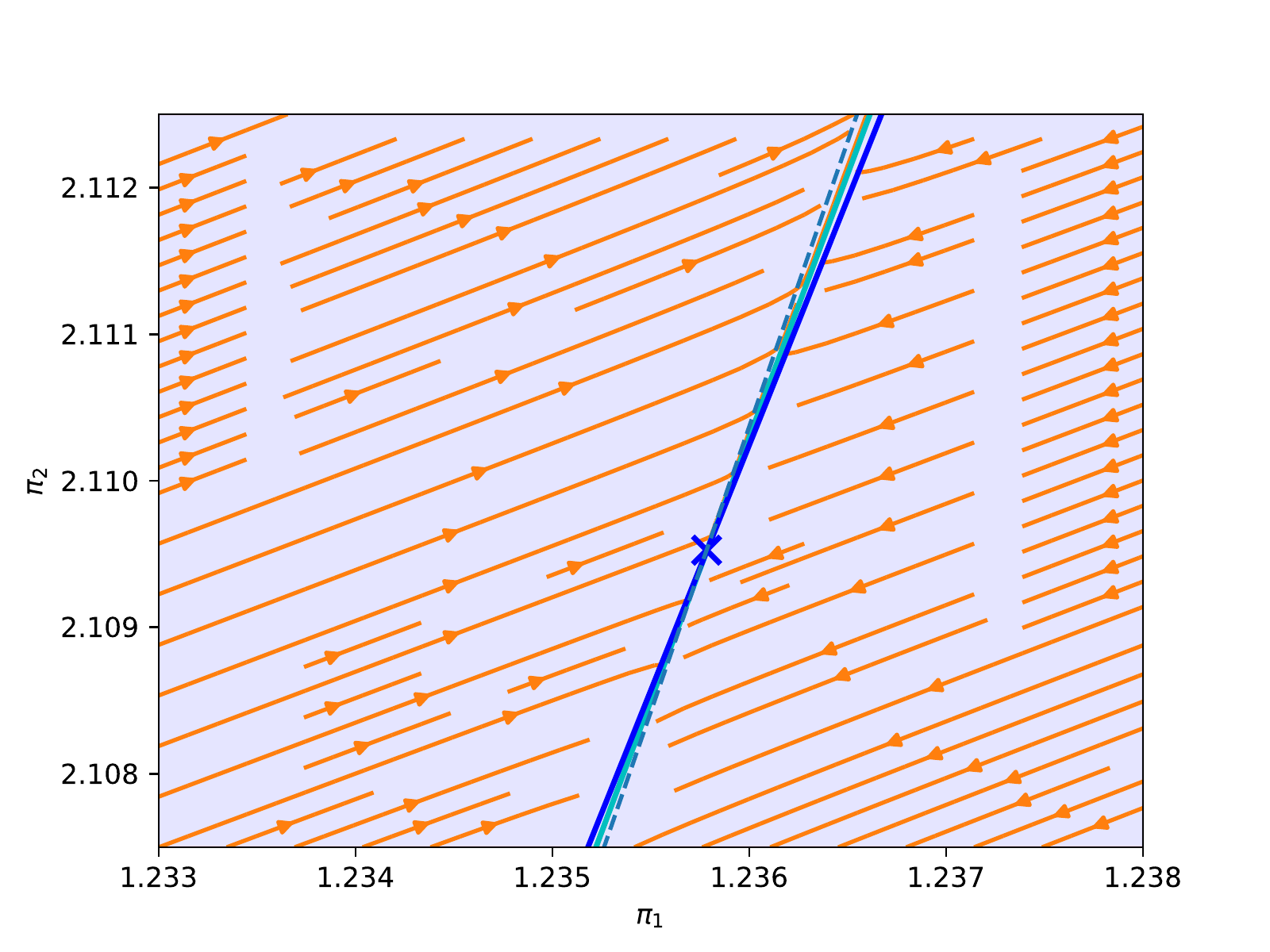}}\hfill
\subfloat[around red equilibrium.\label{fig_equil_red}]{\includegraphics[width=0.3\textwidth]{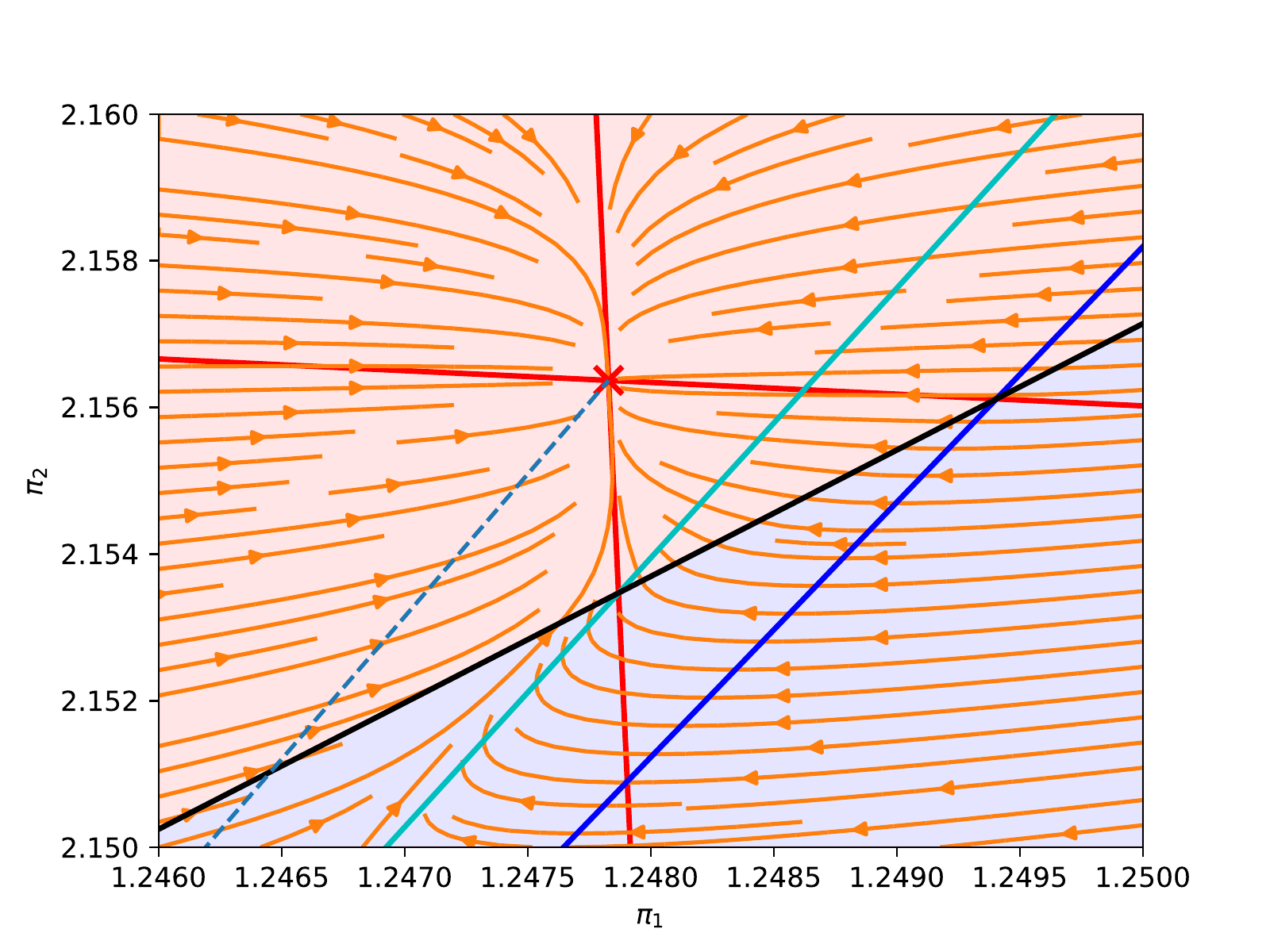}}
\caption{Representation of vector field $\mathbf{\price}' = z\np{\mathbf{\price}}$} \label{fig_equil}
\end{figure*}
\section*{Acknowledgments}
The first-named author want to thank French ambassy of New-Zealand for their administrative help and for the financial support thanks to France--New-Zealand friendship fund. 

The authors want to thank PGMO programs for their financial support.


\bibliography{biblio}

\appendix
\section{Analytical results}
\label{app:analytical}
We first analyses the best responses of the producer and the consumer given a price $\va{\price}$. Then, we deduce conditions on the price and find equilibrium prices.

\subsection{Parametric solution with respect to \texorpdfstring{$\mathbf{\price}$}{the price}}
Assume without loss of generality that $0<\price_1 < \price_2$.
\subsubsection{Statement of consumer's problem}
The consumer solves one problem per scenario $\omega_i$, $i=1,2$. 
Let $V_{1}$, $V_{2}$, $r_{1}$ and $r_{2}$ be strictly positive constants. 
The consumer problem for $\omega_i$ is 
\begin{equation*}
    \min_{y_{i}}
    \quad
    \price_{i}y_{i} -V_{i}y_{i} + \frac{1}{2}r_{i}y_{i}^{2} 
    \eqfinp
\end{equation*}

\subsubsection{Statement of producer's problem}
The  risk aversion of the producer is represented by a coherent risk measure $\mesr$ with risk set $\probset$.
Then the producer problem reads
\begin{equation*}
    \min_{x\geq 0, \mathbf{x}_{r} \geq 0} \quad 
    \FF \Bc{C\np{x} + \va{C}_{r}\np{\mathbf{x}_{r}} - \va{\price}(x+\mathbf{x}_{r})}
    \eqfinp
\end{equation*}
Note that in the case of two outcomes the probability $\PP$ measure can be defined by $\PP(\omega_1)$, which we denote $p$. Hence the probability set $\cP$ can be described by an 
interval $[\underline{p},\bar{p}]$.

Then the producer problem reads
\begin{align}\label{appendix_pb:producer-simple}
    \min_{x \geq 0, x_1 \geq 0, x_2 \geq 0} 
    \quad 
    \frac{1}{2}cx^2
    +
    \begin{multlined}[t]
        \max_{p \in [\underline{p},\bar{p}]}
        \Bga{
            p\Bp{\frac{c_{1} x_{1}^2}{2} - \price_1 \np{x+x_{1}}}
        \\
            + (1-p)\Bp{\frac{c_{2}x_{2}^2}{2} - \price_2 \np{x+x_{2}}}
        }
    \end{multlined}
\end{align}

\subsubsection{Statement of complementary constraints}
The complementary constraint states that a feasible solution is a solution where production is greater than demand for each scenario $\omega \in \omeg$. Moreover, we want equality 
between 
production and demand at equilibrium. These constraints are written
\begin{equation}
\label{appendix_cst:complementarity}
    0 \leq
    \np{x + \mathbf{x}_{r}\np{\omega} } -\mathbf{y}\np{\omega} 
    \perp
    \va{\price}\np{\omega}
    \geq 0
    \eqfinp
\end{equation}

\subsubsection{Analytic solution of the producer's problem}\label{appendix_optimal_control}
Focusing on the second stage problem of~\eqref{appendix_pb:producer-simple} we have
\begin{subequations}
\begin{align*}
    Q^{\np{\va{\price}}}\np{x} 
    &
    & = \max_{p \in [\underline{p},\bar{p}]} \quad
    \begin{multlined}[t]
        p\min_{ x_{1} \geq 0} \Ba{\frac{c_1x_{1}^2}{2} - \price_1 (x+x_{1})}
        \\
        +(1-p)\min_{ x_{2} \geq 0} \Ba{\frac{c_2x_{2}^2}{2} - \price_2 (x+x_{2})}
        \eqfinp
    \end{multlined}
\end{align*}
\end{subequations}

Note that for $i\in\na{1,2}$ $c_i >0$, hence we have 
$x_i^{\opt} = \frac{\price_i}{c_i} $
which in turn gives
\begin{align}
    Q^{\np{\va{\price}}}\np{x} & = \max_{p \in [\underline{p},\bar{p}]} \quad 
    - p\Bp{\frac{\price_1^2}{2c_1}+\price_1 x}
    - (1-p)\Bp{\frac{\price_2^2}{2c_2}+\price_2 x} \label{appendix_eq:Q_as_exp}\\
    &= \max_{p \in [\underline{p},\bar{p}]} \quad 
        p \Bp{\Bp{\frac{\price_2^2}{2c_2}- \frac{\price_1^2}{2c_1}}
        +\bp{\price_2  -\price_1 }x} 
        - \Bp{\frac{\price_2^2}{2c_2}+\price_2 x}
        \eqfinp
    \nonumber
\end{align}

Defining 
\begin{equation}
    x_{c}\np{\va{\price}}= \frac{-1}{\price_2 -\price_1} \Bp{\frac{\price_2^2}{2c_2}- \frac{\price_1^2}{2c_1}}
    \eqfinv
\end{equation}
we see that the worst case probability is given by 
\begin{equation*}
    p^{\opt}\np{\va{\price}} = 
    \begin{cases} 
        \bar{p} &\text{ if } x > x_{c}\np{\va{\price}} \eqfinv\\
        \underline{p} &\text{ if } x < x_{c}\np{\va{\price}} \eqfinv\\
        \text{ any } p \in [\underline{p},\bar{p}] & \text{ if } x = x_{c}\np{\va{\price}} \eqfinv
    \end{cases}
\end{equation*}
and thus Equation~\eqref{appendix_eq:Q_as_exp} yields
\begin{equation*}
    Q^{\np{\va{\price}}}\np{x} = 
    \begin{cases} 
        -\Besp{\bar{p}}{\frac{\va{\price}^2}{2\mathbf{c}_{r}}+ \va{\price}x}
        &\text{ if } x \geq x_{c}\np{\va{\price}}
    \eqfinv
    \\
        -\Besp{\underline{p}}{\frac{\va{\price}^2}{2\mathbf{c}_{r}}+ \va{\price}x}
        &\text{ if } x < x_{c}\np{\va{\price}}
        \eqfinp
    \end{cases}
\end{equation*}
Now the first stage problem (Problem~\eqref{appendix_pb:producer-simple}) reads  
\begin{equation*}
    \min_{x\geq 0} \quad 
    \frac{1}{2}cx^2  
    -\Besp{\bar{p}}{\frac{\va{\price}^2}{2\mathbf{c}_{r}}+ \va{\price}x} \1_{x \geq x_c}
    -\Besp{\underline{p}}{\frac{\va{\price}^2}{2\mathbf{c}_{r}}+ \va{\price}x} \1_{x < x_c}
    \eqfinp
\end{equation*}

We have 
\begin{equation*}
    \min_{x\geq x_c} \quad 
   \frac{1}{2}cx^{2} 
   +
   Q^{\np{\va{\price}}}\np{x}
    = 
    \begin{cases}
        -\frac{1}{2c}\besp{\bar{p}}{\va{\price}}^2  -\Besp{\bar{p}}{\frac{\va{\price}^2}{2\mathbf{c}_{r}}}
        &\text{ if } x_c \leq \frac{\besp{\bar{p}}{\va{\price}}}{c}
    \eqfinv
    \\
        \frac{1}{2}cx_c^2 -\Besp{\bar{p}}{\frac{\va{\price}^2}{2\mathbf{c}_{r}}+ \va{\price}x_c}
        &\text{ if }  \frac{\besp{\bar{p}}{\va{\price}}}{c} \leq x_c
    \eqfinv
    \end{cases}
\end{equation*}
attained at $\frac{\besp{\bar{p}}{\va{\price}}}{c}$ and $x_c$  respectively.

If $x_c > 0 $ we also have 
\begin{equation*}
    \min_{0 \leq x\leq x_c} \quad 
    \frac{1}{2}cx^{2} 
    +
    Q^{\np{\va{\price}}}\np{x}
    = 
    \begin{cases}
        \frac{1}{2}cx_c^2 -\Besp{\bar{p}}{\frac{\va{\price}^2}{2\mathbf{c}_{r}}+ \va{\price}x_c}
        &\text{ if } x_c \leq \frac{\besp{\underline{p}}{\va{\price}}}{c}
    \eqfinv
    \\
        -\frac{1}{2c}\besp{\underline{p}}{\va{\price}}^2  -\Besp{\underline{p}}{\frac{\va{\price}^2}{2\mathbf{c}_{r}}}
        &\text{ if }    \frac{\besp{\underline{p}}{\va{\price}}}{c} \leq x_c
    \eqfinv
    \end{cases}
\end{equation*}
attained at $x_c$ and $\frac{\besp{\underline{p}}{\va{\price}}}{c}$ respectively.
If $x_c \leq 0$ the solution given earlier holds.

Recall that $\besp{\underline{p}}{\va{\price}} \leq \besp{\overline{p}}{\va{\price}}$, thus  the optimal solution can be summed up in Table \ref{appendix_tab:prod_solution}

\begin{table}
\label{appendix_tab:simple-pb-sol}
\centering
\begin{tabular}{| c | c | c c c |}
\hline
 & condition  & $ x^\sharp$ & $x_i^\sharp$ & $y_i^\sharp$  \\
\hline case a) &
$x_c \leq \frac{\besp{\bar{p}}{\va{\price}}}{c}$ &
$\frac{\besp{\bar{p}}{\va{\price}}}{c}$&
$\frac{\price_i}{c_i}$ 
& $\frac{V_i - \price_i}{r_i}$
\\
case b) &
$\frac{\besp{\bar{p}}{\va{\price}}}{c} \leq x_c \leq \frac{\besp{\underline{p}}{\va{\price}}}{c}$ &
 $x_c$& 
$\frac{\price_i}{c_i}$ 
& $\frac{V_i - \price_i}{r_i}$
\\
case c) &
$\frac{\besp{\underline{p}}{\va{\price}}}{c} \leq x_c$&
$\frac{\besp{\underline{p}}{\va{\price}}}{c}$ &
$\frac{\price_i}{c_i}$ 
& $\frac{V_i - \price_i}{r_i}$\\
\hline
\end{tabular}
\caption{Optimal control for producer and consumer problems
\label{appendix_tab:prod_solution}}
\end{table}


\subsection{ Finding price equilibrium}\label{apppendix_analytical_results}
Looking at Table~\ref{appendix_tab:prod_solution} we see that there are three regimes, depending only on the prices $(\price_1,\price_2)$  of optimal first stage solutions. 
We are now looking for prices $(\price_1,\price_2)$ such that the complementarity constraint~\eqref{appendix_cst:complementarity} is satisfied. 
For strictly positive prices, this constraint can be summed up as 
\begin{equation}
    z_i\np{\va{\price}} = x^{\opt}\np{\va{\price}} + x_{1}^{\opt}\np{\va{\price}} - y_{i}^{\opt}\np{\va{\price}} =0 , \qquad i \in \na{1,2}.
\end{equation}

To go further we are going to split cases by defining the auxiliary excess demand function
\begin{align*}
    z^a_i\np{\va{\price}} & = 
\frac{\besp{\bar{p}}{\va{\price}}}{c} +
\frac{\price_i}{c_i} 
 - \frac{V_i - \price_i}{r_i}
 \eqfinv
 \\
 z^b_i\np{\va{\price}} & = 
x_{c}\np{\va{\price}} +
\frac{\price_i}{c_i} 
 - \frac{V_i - \price_i}{r_i}
 \eqfinv
 \\
 z^c_i\np{\va{\price}} & = 
\frac{\besp{\underline{p}}{\va{\price}}}{c} +
\frac{\price_i}{c_i} 
 - \frac{V_i - \price_i}{r_i}
 \eqfinv
\end{align*}
such that we have 
\begin{equation*}
    z = z^a \1_{cx_{c}\np{\va{\price}} \leq \besp{\bar{p}}{\va{\price}}}
    + z^b \1_{\besp{\bar{p}}{\va{\price}} \leq cx_{c}\np{\va{\price}} \leq \besp{\underline{p}}{\va{\price}}}
    + z^c \1_{\besp{\underline{p}}{\va{\price}} \leq cx_{c}\np{\va{\price}}  }
    \eqfinp
\end{equation*}

\subsubsection{Case a and c}

The set of prices such that $z_i^a(\price)=0$ are lines given by 
\begin{align*}
    \price_2 & = \frac{cc_1 V_1 - \bp{c_1 r_1 \bar{p} +c(r_1 + c_1)} \price_1 }{c_1r_1(1-\bar{p})} 
    \eqfinv
    \\
    \price_2 &= \frac{c c_2 V_2 - c_2 r_2 \bar{p}}{c_2 r_2 (1-\bar{p}) + c(r_2 + c_2)}
    \eqfinv
\end{align*}

and the equilibrium can be found by solving the linear system.
Case c is similar, subtituting $\bar{p}$ by $\underline{p}$. 

\subsubsection{Case b}
The set of prices such that $z_i^b(\price)=0$ are an ellipsoid and an hyperbola given by 
\begin{align*}
    \frac{1}{\price_1 - \price_2}\Bp{\frac{\price_2^2}{2c_2}- \frac{\price_1^2}{2c_1}} + \frac{\price_1}{c_1} - \frac{V_1 - \price_1}{r_1} 
    &= 0
    \eqfinv
    \\
    \frac{1}{\price_1 - \price_2}\Bp{\frac{\price_2^2}{2c_2}- \frac{\price_1^2}{2c_1}} + \frac{\price_2}{c_2} - \frac{V_2 - \price_2}{r_2} 
    &= 0
    \eqfinv
\end{align*}
whose affine equations read 
\begin{subequations}
    \begin{align*}
        \frac{\price_2^2}{2c_2} 
        - \Bp{\frac{1}{c_1}+\frac{1}{r_1}} \price_1 \price_2
        + \Bp{\frac{1}{r_1}+\frac{1}{2c_1}} \price_1^2 
        + (\price_2-\price_1)\frac{V_1}{r_1} 
        &= 0
        \eqfinv
        \\ 
         \Bp{\frac{1}{r_2}+\frac{1}{2c_2}} \price_2^2
         -  \Bp{\frac{1}{c_2}+\frac{1}{r_2}} \price_1 \price_2
          +\frac{1}{2c_1} \price_1^2
        - (\price_2-\price_1)\frac{V_2}{r_2} 
        &= 0
        \eqfinp
    \end{align*}
\end{subequations}

\section{Unstability of equilibrium}
\label{app:stability}
\begin{mydef}
Let $\va{\price}\np{t}$ be the general solution of the differential equation
\begin{align}\label{eq_app_system}
    \va{\price}' = z\np{\va{\price}}
    \eqfinv
\end{align}
such that $\va{\price}\np{0} = \va{\price}_{0}$
An equilibrium $\va{\price}^{\opt}$ such that $z\np{\va{\price}} = 0$ is said to be \emph{locally stable} if
for all $\epsilon > 0$, there exists $\delta >0$ such that
\begin{align}
    \nnorm{\va{\price}_{0} - \va{\price}^{\opt}} < \delta
    \Rightarrow
    \nnorm{\va{\price}\np{t} - \va{\price}^{\opt} }
    <
    \epsilon
    \eqsepv
    \forall t > 0
    \eqfinp
\end{align}
\end{mydef}
Using classical results from the field of Ordinary Differential Equations (see~\citet{mattheij2002ordinary}), the local stability can be determined from studying the linearization of the system around the equilibrium point.
\begin{prop}
Let $\va{\price}^{\opt}$ be an equilibrium point. Let $A$ be the Jacobian matrix of $z\np{\va{\price}}$ at point $\va{\price}^{\opt}$. Then $\va{\price}^{\opt}$ is stable if and only both real parts of eigenvalues of $A$ are strictly positive.
\end{prop}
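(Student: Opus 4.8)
The plan is to obtain the criterion from the principle of linearised stability for the autonomous system~\eqref{eq_app_system} — that is, Lyapunov's indirect method together with the Hartman--Grobman theorem; the only ordinary-differential-equation input needed is standard, e.g.\ \citet{mattheij2002ordinary}. I will use two facts: (i) if $F$ is $C^{1}$ near an equilibrium $u^{\opt}$ of $\dot u = F(u)$, then $u^{\opt}$ is locally asymptotically stable whenever every eigenvalue of the Jacobian $DF(u^{\opt})$ has strictly negative real part, and is unstable as soon as some eigenvalue has strictly positive real part; and (ii) at a \emph{hyperbolic} equilibrium (no eigenvalue of $DF(u^{\opt})$ on the imaginary axis) the flow is locally topologically conjugate to its linearisation, so Lyapunov stability in the sense of the Definition above coincides with asymptotic stability. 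It therefore suffices to identify the Jacobian of the right-hand side at $\va{\price}^{\opt}$.

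The first step is a regularity check on the excess-supply map $z$. As described in Appendix~\ref{app:analytical}, price space is split by the two curves $c\,x_{c}\np{\va{\price}}=\besp{\bar{p}}{\va{\price}}$ and $c\,x_{c}\np{\va{\price}}=\besp{\underline{p}}{\va{\price}}$ into the regions of cases~(a), (b) and (c), on which $z$ equals the rational maps $z^{a}$, $z^{b}$, $z^{c}$ respectively. One first notes that each equilibrium of interest — in particular each of the three equilibria of Section~\ref{sec:multiple_eq} — lies in the interior of one such region, so that $z$ is real-analytic, hence $C^{1}$, near $\va{\price}^{\opt}$, with Jacobian obtained by differentiating the relevant $z^{\ell}$; this Jacobian is the matrix $A$ of the statement. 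Since the tâtonnement adjusts prices in the direction of excess demand $-z$, the right-hand side of the dynamics has Jacobian $-A$ at $\va{\price}^{\opt}$, so the linearised dynamics are $\delta'=-A\,\delta$.

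The second step is to read off the conclusion. By fact~(i) applied with $F=-z$, the equilibrium $\va{\price}^{\opt}$ is asymptotically stable iff $-A$ is a Hurwitz matrix and unstable if $-A$ has an eigenvalue with positive real part; by fact~(ii) the Definition's notion of stability is the same as asymptotic stability at a hyperbolic $\va{\price}^{\opt}$. Hence $\va{\price}^{\opt}$ is stable if and only if both eigenvalues of $A$ have strictly positive real part — equivalently, in this $2\times 2$ setting, $\operatorname{tr}A>0$ and $\det A>0$. This last form is the one I would use to verify the three equilibria of Section~\ref{sec:multiple_eq}: differentiate the closed forms of $z^{a}$, $z^{b}$, $z^{c}$ from Appendix~\ref{app:analytical}, evaluate at each equilibrium price, and inspect the signs of the trace and the determinant, which exhibits the blue equilibrium as unstable and the red and green ones as stable.

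The delicate point, and the only step that is not purely mechanical, is the two-sided nature of the equivalence, which genuinely requires $\va{\price}^{\opt}$ to be hyperbolic: if $A$ has a purely imaginary pair of eigenvalues the linearisation is a centre, and this is compatible with a true centre, a stable focus or an unstable focus, so the statement must be understood at equilibria where $A$ has no eigenvalue on the imaginary axis — the generic case, which for the three explicit equilibria can be confirmed directly from the Jacobians just computed. The remaining work, namely writing those $2\times 2$ Jacobians in closed form, is routine differentiation of the formulas of Appendix~\ref{app:analytical}.
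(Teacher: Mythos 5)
Your argument is the standard one and coincides with what the paper itself does, namely invoking the classical linearisation theorems; your version is more careful in two useful respects: you check that $z$ is $C^{1}$ near each equilibrium (each lies in the interior of one of the regions of cases a), b), c), so the piecewise definition causes no trouble), and you isolate the hyperbolicity hypothesis, without which the ``only if'' direction of the proposition is false (a purely imaginary pair for the linearisation is compatible with both Lyapunov stability and instability of the nonlinear system) --- a caveat the paper silently omits. The one point you must make explicit is the sign of the dynamics. The Definition in the appendix, the Remark in Section~\ref{sec:multiple_eq} and the caption of Figure~\ref{fig_equil} all declare the flow to be $\va{\price}' = z\np{\va{\price}}$ with $z$ the excess \emph{supply}; for that system the linearisation is $\delta' = A\,\delta$ and the criterion delivered by Lyapunov's indirect method would be that the eigenvalues of $A$ have strictly \emph{negative} real parts. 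You instead take the flow to be $\va{\price}' = -z\np{\va{\price}}$ (price adjusts in the direction of excess demand, the Samuelson convention), which is the only reading under which the stated criterion ``both eigenvalues of $A$ with strictly positive real part'' is correct, and it is almost certainly the intended one, since it is the convention under which excess supply increasing in own price yields stability and under which the red and green equilibria come out stable. As written, however, your proof establishes the proposition for a dynamical system that differs by a sign from the one in the paper's own Definition; you should say explicitly that you are resolving that inconsistency (either flip the sign in the Definition or replace ``strictly positive'' by ``strictly negative'' in the Proposition), rather than letting the two conventions coexist. With that clarification the proof is complete and is essentially the paper's, spelled out.
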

    
Computing matrix $A$ and its eigenvalues  in exact arithmetic (using Maxima), we find that the blue equilibrium is unstable and that green and red equilibria are stable.
\end{document}